\numberwithin{equation}{section}
\newtheoremstyle{thmlemcorr}{10pt}{10pt}{\itshape}{}{\bfseries}{.}{10pt}{{\thmname{#1}\thmnumber{ #2}\thmnote{ (#3)}}}
\newtheoremstyle{thmlemcorr*}{10pt}{10pt}{\itshape}{}{\bfseries}{.}\newline{{\thmname{#1}\thmnumber{ #2}\thmnote{ (#3)}}}
\newtheoremstyle{remexample}{10pt}{10pt}{}{}{\bfseries}{.}{10pt}{{\thmname{#1}\thmnumber{ #2}\thmnote{ (#3)}}}
\newtheoremstyle{ass}{10pt}{10pt}{}{}{\bfseries}{.}{10pt}{{\thmname{#1}\thmnumber{ A#2}\thmnote{ (#3)}}}
\theoremstyle{thmlemcorr}
\newtheorem{theorem}{Theorem}
\numberwithin{theorem}{section}
\newtheorem{lemma}[theorem]{Lemma}
\newtheorem{corollary}[theorem]{Corollary}
\newtheorem{definition}[theorem]{Definition}
\theoremstyle{remexample}
\newtheorem{remark}[theorem]{Remark}
\theoremstyle{ass}
\newcommand{\Ccal}{\mathcal{C}}
\newcommand{\Rbb}{\mathbb{R}}
\newcommand{\Tbb}{\mathbb{T}}
\newcommand{\T}{\mathbb{T}}
\DeclareMathOperator{\dist}{dist}
\DeclareMathOperator{\diverg}{div}
\DeclareMathOperator{\supp}{supp}
\newcommand{\norm}[1]{\|#1\|}
\newcommand{\R}{\mathbb{R}}
\newcommand{\eps}{\epsilon}
\def\d{{\,\rm d}}
\newcommand{\vr}{\varrho}
\newcommand{\DC}{C^\infty_c((0,T)\times\T^d)}
\newcommand{\intT}[1]{\int_{\T^d}  #1  \ \dx}
\newcommand{\dx}{ dx}
\newcommand{\dt}{dt}
\renewcommand{\eps}{\varepsilon}
\renewcommand{\epsilon}{\varepsilon}
\renewcommand{\phi}{\varphi}
\begin{document}


\title[Energy Conservation for Compressible Euler]{Energy Conservation for the Compressible Euler and Navier-Stokes Equations with Vacuum}

\author{Ibrokhimbek Akramov}
\address{\textit{Ibrokhimbek Akramov:} Institute of Applied Mathematics, Leibniz University Hannover, Welfengarten~1, 30167 Hannover, Germany}
\email{akramov@ifam.uni-hannover.de}

\author{Tomasz D\k{e}biec}
\address{\textit{Tomasz D\k{e}biec:}  Institute of Applied Mathematics and Mechanics, University of Warsaw, Banacha 2, 02-097 Warszawa, Poland}
\email{t.debiec@mimuw.edu.pl}

\author{Jack Skipper}
\address{\textit{Jack Skipper:} Institute of Applied Mathematics, Leibniz University Hannover, Welfengarten~1, 30167 Hannover, Germany}
\email{skipper@ifam.uni-hannover.de}

\author{Emil Wiedemann}
\address{\textit{Emil Wiedemann:} Institute of Applied Mathematics, Leibniz University Hannover, Welfengarten~1, 30167 Hannover, Germany}
\email{wiedemann@ifam.uni-hannover.de}


\maketitle

\begin{abstract}
	We consider the compressible isentropic Euler equations on $\mathbb{T}^d\times [0,T]$ with a pressure law $p\in C^{1,\gamma-1}$, where $1\le \gamma <2$. This includes all physically relevant cases, e.g.\ the monoatomic gas. We investigate under what conditions on its regularity a weak solution conserves the energy.
	Previous results have crucially assumed that $p\in C^2$ in the range of the density, however, 
	  for realistic pressure laws this means that we must exclude the vacuum case. Here we improve these results by giving a number of sufficient conditions for the conservation of energy, even for solutions that may exhibit vacuum: Firstly, by assuming the velocity to be a divergence-measure field; secondly, imposing extra integrability on $1/\rho$ near a vacuum; thirdly, assuming $\rho$ to be quasi-nearly subharmonic near a vacuum; and finally, by assuming that $u$ and $\rho$ are H\"older continuous. We then extend these results to show global energy conservation for the domain $\Omega\times [0,T]$ where $\Omega$ is bounded with a $C^2$ boundary. We show that we can extend these results to the compressible Navier-Stokes equations, even with degenerate viscosity. 
%


\noindent\textsc{MSC (2010): 35Q31 (primary); 35Q10, 35L65, 76N10.}

\noindent\textsc{Keywords: Compressible Euler equations, Compressible Navier-Stokes equations, Vacuum, Onsager's conjecture, Energy conservation.}


\end{abstract}






\section{Introduction}

In recent years some substantial effort has been directed towards investigating the relation between energy (or, more generally, entropy) conservation and regularity of weak solutions to a given physical system of equations.

Onsager's conjecture states that a weak solution of the (three-dimensional) incompressible Euler system will conserve energy if it is H\"older regular with exponent greater than $1\slash3$. Otherwise it is possible for solutions to exist where anomalous dissipation of energy occurs. First results towards energy conservation for weak solutions are due to Eyink~\cite{Ey1994} and Constantin, E, Titi~\cite{constetiti}. The sharpest results in optimal Besov spaces are due to Cheskidov et al.~\cite{ChCoFrSh2008} and Fjordholm-Wiedemann~\cite{FjWi18}. Further, Bardos and Titi  \cite{bardos2018onsager}, Bardos-Titi-Wiedemann~\cite{BaTiWi2018}, and Drivas-Nguyen~\cite{DrNg} have extended these results to consider solutions on a bounded domain.

Investigating the possibility of analogous statements for other systems has become another lively direction of research. Sufficient regularity conditions for the energy to be conserved were studied for a number of models: compressible Euler~\cite{FGSW}, the full Euler system~\cite{DrivasEyink}, compressible Navier-Stokes~\cite{Yu}, or Euler-Korteweg~\cite{DGSGT}. A general class of first-order conservation laws was considered in~\cite{GMSG}, and in~\cite{BGSTW} on bounded domains.


Another direction of research was aimed towards the construction of $(1\slash3-\eps)-$H\"older continuous 
solutions to the incompressible Euler system that do \emph{not} conserve energy. With the application, and further refinements, of the method of convex integration this was achieved recently by Isett~\cite{Is2016} and by Buckmaster et al.\ \cite{buckmaster2017onsager}. Thus the famous conjecture of Lars Onsager for the incompressible Euler equations is fully resolved.

One of the major differences between incompressible and compressible fluid dynamics is the possible formation of \emph{vacuum} in the latter case. This means that the density of the fluid may become zero in some region. More precisely, consider the isentropic compressible Euler system  
\begin{equation}\label{eq:compressibleEuler}
\begin{aligned}
\partial_t(\rho u)+\diverg(\rho u\otimes u)+\nabla p(\rho)&=0,\\
\partial_t\rho+\diverg(\rho u)&=0,
\end{aligned}
\end{equation}
where $u$ denotes the velocity and $\rho$ the density of the fluid. We will specify the constitutive pressure law $p=p(\rho)$ later. It is classically known that conservation laws like~\eqref{eq:compressibleEuler} may develop singularities (shocks) in finite time, which prohibits the use of a smooth notion of solution. Rather, one works with solutions in the sense of distributions, which may be very rough. Suppose now the density were initially bounded away from zero, $\rho^0\geq c>0$. If the solution were smooth, then from the continuity equation $\partial_t\rho+\diverg(\rho u)=0$ it would easily follow (cf.~equation (7) in~\cite{DiLi89}) that $\rho$ remains bounded away from zero for all times. More precisely, this requires $u$ to have bounded divergence. However, there seems to be no way to guarantee that the velocity component of a weak solution of~\eqref{eq:compressibleEuler} has bounded divergence, and thus it can not be excluded that the solution spontaneously develops vacuum in finite time. In fact, to our knowledge it remains an outstanding open question whether this can actually occur for the compressible Euler or even Navier-Stokes equations.  

The formation of vacuum constitutes a degeneracy that, in many situations, vastly complicates the mathematical analysis of compressible models. For instance, the compressible Euler equations cease to be strictly hyperbolic in vacuum regions. In the context of the current contribution, densities close to zero invalidate the methods and results from previous works like~\cite{FGSW, GMSG, BGSTW}: There, it is a crucial assumption that the nonlinearities depend on the dependent variables in a twice continuously differentiable fashion, in order to treat them like a quadratic expression in the commutator estimates. For the system~\eqref{eq:compressibleEuler}, a typical and physically reasonable pressure law would be the polytropic one, i.e.\ $p(\rho)=\rho^\gamma$ with $\gamma>1$. The second derivative, however, is of order $\rho^{\gamma-2}$ and thus blows up at zero, at least if $\gamma<2$. But the regime $1<\gamma<2$ is precisely the relevant one (for instance, a monoatomic gas has $\gamma=5/3$).   

The starting point of our current work is the result of Feireisl, Gwiazda, \'Swierczewska-Gwiazda, Wiedemann for the compressible Euler system~\cite{FGSW}, which we quote below. It gives sufficient conditions, in terms of Besov regularity of a weak solution, for energy conservation, but only as long as vacuum is excluded. In the presence of vacuum, the relevant commutator estimate involving the pressure completely breaks down, and it turns out that substantially new techniques are required to fix this. To our knowledge, the only other result on energy conservation for non-$C^2$ nonlinearities is the one on active scalar equations~\cite{AkWi2018}, using however different techniques.

In the current article, we give a number of sufficient conditions to ensure energy conservation even after possible formation of vacuum.

First (Section~\ref{section:divergencevmeasure}), we consider the condition that the velocity be a so-called divergence-measure field; this notion is well-known in geometric measure theory and hyperbolic conservation laws, but it may seem a bit unmotivated to consider in the present situation. However, justification comes from the compressible Navier-Stokes system, whose a priori estimates ensure this condition. We extensively discuss the ramifications of our result with respect to the Navier-Stokes equations in Section~\ref{NSE}, where we also compare it to recent work of Cheng Yu~\cite{Yu}.

In Section~\ref{section:Holdercontinuity}, we identify as a sufficient condition for energy conservation an estimate for the quotient between the density and its mollification, see equation~\eqref{eq:mollifierratiocondition}. This, in itself, may seem rather artificial, and we go on to identify more natural conditions that will ensure~\eqref{eq:mollifierratiocondition} to hold. Arguably, our strongest result is Corollary~\ref{HolderCor}: Under the slightly stronger assumption of H\"older (instead of Besov) regularity, but with the expected exponents, we can show energy conservation \emph{no matter how the density behaves near vacuum}. It is surprising that this result is completely agnostic to the way that $\rho$ approaches zero. It crucially relies on a new measure-theoretic observation (Lemma~\ref{mollificationlem}) that may be of independent interest.

If one does want to assume only Besov regularity, then one needs to make further assumptions on the density near vacuum; we show that energy is conserved provided the density descends into vacuum sufficiently fast (Corollary~\ref{fastdescent}) or sufficiently slowly (Corollary~\ref{slowdescent}).

Finally, in Section~\ref{section:with boundary} we demonstrate how to extend our results, so far shown only under periodic boundary conditions, to the case of a bounded domain.

\subsection{The result of Feireisl et al.}
To formulate the local or global energy equality for~\eqref{eq:compressibleEuler} it is useful to define the so-called pressure potential by
\begin{equation*}
P(\rho)=\rho\int_1^\rho\frac{p(r)}{r^2}dr.
\end{equation*}
The following theorem was proven 
in~\cite[Theorem~4.1]{FGSW}.
\begin{theorem}\label{thm:compressibleonsager}
	Let $\rho$, $u$ be a solution of~\eqref{eq:compressibleEuler} in the sense of distributions. Assume 
	\begin{equation}
	u\in B_3^{\alpha,\infty}((0,T)\times\Tbb^d),\hspace{0.3cm}\rho, \rho u\in B_3^{\beta,\infty}((0,T)\times\Tbb^d),\hspace{0.3cm}
	0 \leq \underline{\rho} \leq \rho \leq \overline{\rho} \ \mbox{a.e. in } (0,T)\times\Tbb^d,
	\end{equation}
	for some constants $\underline{\vr}$, $\overline{\vr}$, and
	$0\leq\alpha,\beta\leq1$ such that
	\begin{equation}
	\beta > \max \left\{ 1 - 2 \alpha; \frac{1 - \alpha}{2} \right\}.
	\end{equation}
	Assume further that $p \in C^2[\underline{\vr}, \overline{\vr}]$, and, in addition
	\begin{equation}
	p'(0) = 0 \ \mbox{as soon as}\ \underline{\vr} = 0.
	\end{equation}
	Then the energy is locally conserved, i.e.
	\begin{equation}\label{eq:energyidentity}
	\partial_t\left(\frac{1}{2}\rho|u|^2+P(\rho)\right)+\diverg\left[\left(\frac{1}{2}\rho|u|^2+p(\rho)+P(\rho)\right)u\right]=0
	\end{equation}
	in the sense of distributions on $(0,T)\times\Tbb^d$.
\end{theorem}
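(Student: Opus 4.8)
The plan is to run a Constantin--E--Titi type mollification argument \cite{constetiti}, adapted to the compressible system and its pressure potential; this is essentially the argument of \cite{FGSW}. Fix a standard mollifier and write $f^\epsilon$ for the space--time mollification of $f$ at scale $\epsilon$; since the claimed identity need only be tested against $\psi\in\DC$, it is harmless that the mollified equations hold only on a slightly shrunk domain, and mollifying in time as well as space makes all mollified quantities smooth. The guiding \emph{formal} computation is: dotting the momentum equation with $u$ and using the continuity equation (multiplied by $\tfrac12|u|^2$) gives
\[
\partial_t\!\left(\tfrac12\rho|u|^2\right)+\diverg\!\left[\left(\tfrac12\rho|u|^2+p(\rho)\right)u\right]=p(\rho)\diverg u,
\]
while multiplying the continuity equation by $P'(\rho)$ and using $\rho P'(\rho)-P(\rho)=p(\rho)$ gives
\[
\partial_t P(\rho)+\diverg\!\left(P(\rho)u\right)=-p(\rho)\diverg u;
\]
adding the two produces \eqref{eq:energyidentity}. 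Note that $\diverg u$ is no function when $\alpha<1$, so the two copies of $p(\rho)\diverg u$ must be arranged to cancel \emph{before} any limit is taken.

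Carrying this out, I would mollify both equations, dot the mollified momentum equation with $u^\epsilon$, multiply the mollified continuity equation by $\tfrac12|u^\epsilon|^2$ and by $P'(\rho^\epsilon)$, and reorganise exactly as in the formal computation, now with smooth functions. This yields \eqref{eq:energyidentity} for a regularisation of the energy, up to error terms of four kinds: (i) the convective commutator $R^\epsilon_{\mathrm{conv}}=(\rho u\otimes u)^\epsilon-(\rho u)^\epsilon\otimes u^\epsilon$, which after integration by parts is paired with $\nabla u^\epsilon$; (ii) the continuity commutator $R^\epsilon_{\mathrm{cont}}=(\rho u)^\epsilon-\rho^\epsilon u^\epsilon$, appearing both as $\partial_t R^\epsilon_{\mathrm{cont}}\cdot u^\epsilon$ and, through the $P'$-balance, as $P'(\rho^\epsilon)\diverg R^\epsilon_{\mathrm{cont}}$; (iii) the ``pressure defect'' $p(\rho)^\epsilon-p(\rho^\epsilon)$, paired with $\diverg u^\epsilon$; and (iv) harmless differences $g^\epsilon-g$ for $g\in\{\rho u,\,p(\rho),\,P(\rho),\dots\}$ and $\rho^\epsilon u^\epsilon\otimes u^\epsilon-\rho u\otimes u$, which tend to $0$ strongly in the relevant $L^q$ because smooth functions are dense and $\rho$ is uniformly bounded.

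The quantitative step uses the standard Besov estimates $\|g^\epsilon-g\|_{L^p}\lesssim\epsilon^s\|g\|_{B^{s,\infty}_p}$, $\|\nabla g^\epsilon\|_{L^p}\lesssim\epsilon^{s-1}\|g\|_{B^{s,\infty}_p}$ and the commutator bound $\|(fg)^\epsilon-f^\epsilon g^\epsilon\|_{L^r}\lesssim\epsilon^{s+t}\|f\|_{B^{s,\infty}_p}\|g\|_{B^{t,\infty}_q}$ with $\tfrac1r=\tfrac1p+\tfrac1q$, together with the observation that, since $\underline\rho\le\rho\le\overline\rho$ with $p\in C^2[\underline\rho,\overline\rho]$, the compositions $p(\rho),P(\rho)$ again lie in $B^{\beta,\infty}_3$ and $p',P',p'',P''$ evaluated at $\rho$ or $\rho^\epsilon$ are bounded. (It is here that $p'(0)=0$ when $\underline\rho=0$ is needed: it makes $P''(\rho)=p'(\rho)/\rho$ bounded down to the vacuum, i.e.\ $P\in C^2[0,\overline\rho]$.) The two potentially dangerous contributions then are
\[
\|R^\epsilon_{\mathrm{conv}}\|_{L^{3/2}}\,\|\nabla u^\epsilon\|_{L^3}\lesssim\epsilon^{\alpha+\beta}\cdot\epsilon^{\alpha-1}=\epsilon^{2\alpha+\beta-1},
\qquad
\|p(\rho)^\epsilon-p(\rho^\epsilon)\|_{L^{3/2}}\,\|\diverg u^\epsilon\|_{L^3}\lesssim\epsilon^{2\beta}\cdot\epsilon^{\alpha-1}=\epsilon^{2\beta+\alpha-1},
\]
and, similarly, the term $P'(\rho^\epsilon)\diverg R^\epsilon_{\mathrm{cont}}$ costs $\|\nabla\rho^\epsilon\|_{L^3}\,\|R^\epsilon_{\mathrm{cont}}\|_{L^{3/2}}\lesssim\epsilon^{\alpha+2\beta-1}$. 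The first vanishes as $\epsilon\to0$ exactly when $\beta>1-2\alpha$; the others when $\beta>\tfrac{1-\alpha}{2}$; jointly, this is the hypothesis on $(\alpha,\beta)$, and every remaining error carries a strictly larger power of $\epsilon$ or is of type (iv). The key point in the second estimate is that, upon Taylor-expanding $p$ about $\rho^\epsilon$, the \emph{linear} term integrates to zero against the mollifier (because $\int\phi_\epsilon=1$ and $\rho^\epsilon=\phi_\epsilon*\rho$), so that $p(\rho)^\epsilon-p(\rho^\epsilon)$ is genuinely quadratic in the modulus of continuity of $\rho$, which upgrades the bound from $\epsilon^{\beta}$ to $\epsilon^{2\beta}$ — and this uses precisely the boundedness of $p''$, i.e.\ $p\in C^2$ on the range of $\rho$.

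Passing to the limit $\epsilon\to0$ against an arbitrary $\psi\in\DC$ then yields \eqref{eq:energyidentity} in the sense of distributions. The hard part is the pressure: since $\diverg u$ is not a function one cannot control $p(\rho)\diverg u$ directly and must exploit the cancellation between its two occurrences, which survives mollification only because of the $C^2$ (equivalently, quadratic-Taylor) structure of $p$ and $P$. This is exactly the mechanism that breaks down at vacuum for $p(\rho)=\rho^\gamma$ with $1<\gamma<2$, where $p''$ and $P''$ blow up at $\rho=0$ — the difficulty that the rest of the paper is devoted to circumventing.
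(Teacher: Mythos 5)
Your proposal is correct and follows essentially the same route as the paper: the paper quotes this theorem from~\cite{FGSW} and, in its Section on the derivation of the local energy equality, sets up exactly the mollification, the cancellation of the two $p(\rho^\eps)\diverg u^\eps$ occurrences via the $P'(\rho^\eps)$-weighted continuity equation, and the commutator terms $r_1^\eps,r_2^\eps,r_3^\eps,s^\eps$ that you identify, with the same Besov rates $\eps^{2\alpha+\beta-1}$ and $\eps^{\alpha+2\beta-1}$ and the same use of $p'(0)=0$ to keep $P''$ bounded at vacuum. Your quadratic-Taylor explanation of why $p(\rho)^\eps-p(\rho^\eps)=\BigO(\eps^{2\beta})$ is precisely the mechanism the paper points to as breaking down when $p\notin C^2$.
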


Our aim in the current paper is to improve the above theorem by relaxing the $C^2$ assumption on the pressure. This will allow, for instance, to apply the theorem in the physically relevant case of the isentropic pressure law $p(\rho)=\kappa\rho^\gamma$ with the adiabatic coefficient $\gamma\in(1,2)$, without excluding vacuum.


\section{Preliminaries}\label{besov}

\subsection{Function spaces}

For $\Omega:=(0,T)\times\Tbb^d$ we recall the Besov spaces $B_p^{\alpha,\infty}(\Omega)$  
 which is the space of tempered distributions $w$ for which the norm
\begin{equation}\label{besovshift}
\norm{w}_{B_p^{\alpha,\infty}(\Omega)}:=\norm{w}_{L^p(\Omega)}+\sup_{\xi\in\Omega}\frac{\norm{w(\cdot+\xi)-w}_{L^p(\Omega\cap(\Omega-\xi))}}{|\xi|^\alpha}
\end{equation}
is finite. 
The above norm provides a control over shifts of the distribution $w$, making Besov spaces a convenient environment for our analysis, as it relies on convolutions with a mollifying kernel.

Let $\eta\in C_c^\infty\left(\Rbb^N\right)$ be a positive, radial function of integral $1$ with 
\begin{equation}
\eta(x)=\begin{cases} 1\quad \mbox{for} \quad
|x|\le \frac13,\\
0\quad\mbox{for}\quad |x|\ge 1,
\end{cases}
\end{equation}
and for $N=1+d$ set 
\begin{equation*}
\eta^\epsilon(x)=\frac{1}{\epsilon^{N}}\eta\left(\frac{x}{\epsilon}\right).
\end{equation*}
We define the notation $w^\epsilon:=\eta^\epsilon*w$. For any function $w$, $w^\epsilon$ is well-defined on $\Omega^\epsilon=\{x\in\Omega: \d(x,\partial\Omega)>\epsilon\}$.


It is then easy to check that the definition of the Besov spaces implies
\begin{equation}\label{besoveps}
\norm{w^\epsilon-w}_{L^p(\Omega^\eps)}\leq C\epsilon^\alpha\norm{w}_{B_p^{\alpha,\infty}(\Omega)}
\end{equation}
and
\begin{equation}\label{besovepsgradient}
\norm{\nabla w^\epsilon}_{L^p(\Omega^\eps)}\leq C\epsilon^{\alpha-1}\norm{w}_{B_p^{\alpha,\infty}(\Omega)}.
\end{equation}

By $\mathcal{M}(\Omega)$ we denote the space of signed Radon measures equipped with the total variation norm
\begin{equation}
\norm{\mu}_{TV}\coloneqq \int_{\Omega}\mathrm{d}|\mu|.
\end{equation}


%
%

\subsection{Derivation of the local energy equality}
The starting point in the proof of Theorem~\ref{thm:compressibleonsager}, as well as all our results, is to mollify the Euler equations, then derive the local energy equality for the regularized quantities, and finally estimate commutator errors generated by nonlinear terms. As this strategy is a common part in the proofs of our theorems, we devote this section to the said derivation, omitting the details of passing to the limit under the assumptions of Theorem~\ref{thm:compressibleonsager}. 


We begin by mollifying the momentum equation in time and space to obtain
\begin{equation}\label{eq:mollifiedmomentum}
\partial_t(\rho u)^\eps+\diverg(\rho u\otimes u)^\eps+\nabla p^\eps(\rho)=0,
\end{equation}
or, in terms of commutators
\begin{equation}\label{eq:smoothmomentum}
\begin{aligned}
\partial_t(\rho^\eps u^\eps)+\diverg((\rho u)^\eps\otimes u^\eps)+\nabla p(\rho^{\eps})
&=\partial_t(\rho^\eps u^\eps - (\rho u)^\eps) + \diverg((\rho u)^\eps\otimes u^\eps\\& - (\rho u\otimes u)^\eps)
+ \nabla\left(p(\rho^\eps) - p^\eps(\rho)\right).
\end{aligned}
\end{equation}	
Making use of the following identity
\begin{equation}
\diverg((\rho u)^\eps\otimes u^\eps) = u^\eps\diverg{(\rho u)^\eps} + ((\rho u)^\eps\cdot\nabla)u^\eps,
\end{equation}
we can see that multiplying~\eqref{eq:smoothmomentum} by $u^\eps$ yields
\begin{equation}\label{eq:smoothmomentum2}
\rho^\eps\partial_t\left(\frac12|u^\eps|^2\right)+\left((\rho u)^\eps\cdot\nabla\right)\frac12|u^\eps|^2 + \rho^\eps u^\eps\;\nabla\left(P'(\rho^\eps)\right)
=r_1^\eps + r_2^\eps + r_3^\eps,
\end{equation}
where
\begin{align}
r_1^\eps &= \partial_t(\rho^\eps u^\eps - (\rho u)^\eps)\cdot u^\eps,\\
r_2^\eps &= \diverg((\rho u)^\eps\otimes u^\eps - (\rho u\otimes u)^\eps)\cdot u^\eps,\\
r_3^\eps &= \nabla\left(p(\rho^\eps) - p^\eps(\rho)\right)\cdot u^\eps. \label{eq:R3}
\end{align}

\noindent Using the mollified continuity equation
\begin{equation}\label{eq:smoothmass}
\partial_t\rho^\eps+\diverg(\rho u)^\eps=0,
\end{equation}
multiplied by $\frac12|u^\eps|^2$, we can rewrite~\eqref{eq:smoothmomentum2} as
\begin{equation}\label{eq:smoothmomentum3}
\partial_t\left(\frac12\rho^\eps|u^\eps|^2\right) + \diverg\left((\rho u)^\eps\;\frac12|u^\eps|^2\right) + \rho^\eps u^\eps\;\nabla\left(P'(\rho^\eps)\right)
=r_1^\eps + r_2^\eps + r_3^\eps.
\end{equation}
On the other hand writing~\eqref{eq:smoothmass} in the form
\begin{equation}\label{eq:smoothmass2}
\partial_t\rho^\epsilon+\diverg(\rho^\epsilon u^\epsilon)=\diverg(\rho^\epsilon u^\epsilon-(\rho u)^\epsilon),
\end{equation}
and multiplying by $P'(\rho^\eps)$ we get
\begin{equation}\label{eq:smoothmassP}
\partial_t\left(P(\rho^\eps)\right)+\diverg(\rho^\eps u^\eps)P'(\rho^\eps) = \diverg(\rho^\eps u^\eps - (\rho u)^\eps)\;P'(\rho^\eps).
\end{equation}
Combining~\eqref{eq:smoothmomentum3} and~\eqref{eq:smoothmassP} we obtain
\begin{equation}\label{eq:smoothEnergy}
\begin{aligned}
\partial_t\left(\frac12\rho^\eps |u^\eps|^2 + P(\rho^\eps)\right) &+ \diverg{\left((\rho u)^\eps\;\frac12|u^\eps|^2+\rho^\eps u^\eps P'(\rho^\eps)\right)}\\
&\hspace{-0.5cm}=r_1^\eps + r_2^\eps + r_3^\eps + s^\eps,
\end{aligned}
\end{equation}
where we set
\begin{equation}
s^\eps \coloneqq \diverg(\rho^\eps u^\eps - (\rho u)^\eps)\;P'(\rho^\eps).
\end{equation}

\medskip

The proof of Theorem~4.1 in \cite{FGSW} shows that when $\rho,u$ are Besov regular and $p$ is of class $C^2$, then the left-hand side of~\eqref{eq:smoothEnergy} converges to the left-hand side of~\eqref{eq:energyidentity} and each term on the right-hand side of~\eqref{eq:smoothEnergy} converges to zero, each convergence in the sense of distributions.


\section{Energy Conservation assuming the divergence of velocity is a  bounded measure}\label{section:divergencevmeasure}

Our first result establishes local energy conservation for weak solutions of~\eqref{eq:compressibleEuler} under the additional assumption that the velocity field $u$ is a divergence-measure field. 

\begin{remark}
	See ~\cite{ChenTorres2005}, and references therein, for details on the role of divergence-measure fields in the theory of hyperbolic conservation laws.
\end{remark}

\begin{theorem}\label{thm:compressibleonsagerDivMeas}
	Let $\rho$, $u$ be a solution of~\eqref{eq:compressibleEuler} in the sense of distributions. Assume 
	\begin{equation}
	u\in B_3^{\alpha,\infty}((0,T)\times\Tbb^d),\hspace{0.3cm}\rho, \rho u\in B_3^{\beta,\infty}((0,T)\times\Tbb^d),\hspace{0.3cm}
	0 \leq \underline{\rho} \leq \rho \leq \overline{\rho} \ \mbox{a.e. in } (0,T)\times\Tbb^d,
	\end{equation}
	for some constants $\underline{\vr}$, $\overline{\vr}$, and
	$0\leq\alpha,\beta\leq1$ such that
	\begin{equation}\label{alphabeta}
	\beta > \max \left\{ 1 - 2 \alpha; \frac{1 - \alpha}{2} \right\}.
	\end{equation}
	Assume further that
	\begin{equation}
	\diverg u \in \mathcal{M}((0,T)\times \T^d),\qquad\text{and}\qquad
	p \in C[\underline{\vr}, \overline{\vr}].
	\end{equation}
	Then the energy is locally conserved, i.e.
	\begin{equation}
	\partial_t\left(\frac{1}{2}\rho|u|^2+P(\rho)\right)+\diverg\left[\left(\frac{1}{2}\rho|u|^2+p(\rho)+P(\rho)\right)u\right]=0
	\end{equation}
	in the sense of distributions on $(0,T)\times\Tbb^d$.
\end{theorem}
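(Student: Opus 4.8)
The plan is to run the mollification scheme of Section~\ref{besov}: starting from the smooth energy balance~\eqref{eq:smoothEnergy}, show that its left-hand side converges in $\mathcal{D}'((0,T)\times\T^d)$ to the left-hand side of~\eqref{eq:energyidentity}, and its right-hand side $r_1^\eps+r_2^\eps+r_3^\eps+s^\eps$ to $0$. For any $p\in C[\underline\vr,\overline\vr]$ the function $P$ and the map $\vr\mapsto\vr P'(\vr)=P(\vr)+p(\vr)$ extend continuously and boundedly to $[\underline\vr,\overline\vr]$; when $\underline\vr=0$ the only non-obvious point is that $\vr\int_\vr^1 r^{-2}\bigl(p(r)-p(\vr)\bigr)\,\di r\to0$ as $\vr\to0^+$, which follows from uniform continuity of $p$ by splitting the integral at $\sqrt{\vr}$. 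Granting this, the left-hand side of~\eqref{eq:smoothEnergy} converges as desired and $r_1^\eps,r_2^\eps\to0$ under~\eqref{alphabeta}, exactly as in the proof of~\cite[Theorem~4.1]{FGSW} (neither $r_1^\eps$ nor $r_2^\eps$ involves the pressure). The new content is thus the treatment of $r_3^\eps$ and $s^\eps$.

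For a test function $\phi$ I would combine $r_3^\eps$ and $s^\eps$. Integrating $r_3^\eps$ by parts and using $\diverg u^\eps=(\diverg u)^\eps$ gives $\langle r_3^\eps,\phi\rangle=-\int(p(\vr^\eps)-p^\eps(\vr))(\diverg u)^\eps\phi-\int(p(\vr^\eps)-p^\eps(\vr))u^\eps\cdot\nabla\phi$. For $s^\eps$, write $\diverg(\vr^\eps u^\eps-(\vr u)^\eps)=\vr^\eps(\diverg u)^\eps+\partial_t\vr^\eps+u^\eps\cdot\nabla\vr^\eps$ (the mollified continuity equation), apply the chain rule for the smooth $\vr^\eps$ (valid where $\vr^\eps>0$, which suffices) together with $\vr P'(\vr)-P(\vr)=p(\vr)$, and the product rule (licit since $P(\vr^\eps)$ is continuous and $u^\eps$ smooth), to get $s^\eps=p(\vr^\eps)(\diverg u)^\eps+\partial_t P(\vr^\eps)+\diverg(P(\vr^\eps)u^\eps)$. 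Adding, the $(\diverg u)^\eps$-terms recombine to $p^\eps(\vr)(\diverg u)^\eps$, so
\begin{equation*}
\langle r_3^\eps+s^\eps,\phi\rangle=\int p^\eps(\vr)(\diverg u)^\eps\,\phi\,\di x\,\di t+\bigl\langle\partial_t P(\vr^\eps)+\diverg(P(\vr^\eps)u^\eps),\phi\bigr\rangle-\int\bigl(p(\vr^\eps)-p^\eps(\vr)\bigr)\,u^\eps\cdot\nabla\phi\,\di x\,\di t .
\end{equation*}
The last integral tends to $0$, since $p(\vr^\eps)-p^\eps(\vr)\to0$ in $L^{3/2}_\loc$ (uniform continuity of $p$, $\vr^\eps\to\vr$ a.e.\ and boundedly, $p^\eps(\vr)\to p(\vr)$ in $L^{3/2}_\loc$) while $u^\eps\to u$ in $L^3$; and $\partial_t P(\vr^\eps)+\diverg(P(\vr^\eps)u^\eps)\to\partial_t P(\vr)+\diverg(P(\vr)u)$ in $\mathcal{D}'$ because $P(\vr^\eps)\to P(\vr)$ in $L^q_\loc$ and $P(\vr^\eps)u^\eps\to P(\vr)u$ in $L^1_\loc$.

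This reduces everything to the term $\int p^\eps(\vr)(\diverg u)^\eps\phi$, and here the divergence-measure hypothesis enters: since $\|p^\eps(\vr)\|_{L^\infty}\le\|p\|_{C[\underline\vr,\overline\vr]}$ and $\|(\diverg u)^\eps\|_{L^1}\le\|\diverg u\|_{TV}$, the family $p^\eps(\vr)(\diverg u)^\eps\,\di x\,\di t$ is bounded in $\mathcal{M}((0,T)\times\T^d)$; pass to a weakly-$*$ convergent subsequence with limit $\mu$. Along it the right-hand side of~\eqref{eq:smoothEnergy} converges to $\langle\mu+\partial_t P(\vr)+\diverg(P(\vr)u),\phi\rangle$, while the left-hand side converges to the fixed left-hand side of~\eqref{eq:energyidentity}; hence it suffices to prove
\begin{equation*}
\partial_t P(\vr)+\diverg\bigl(P(\vr)u\bigr)=-\mu=-\lim_{\eps\to0}\bigl[p^\eps(\vr)(\diverg u)^\eps\bigr]\qquad\text{in }\mathcal{D}',
\end{equation*}
that is, that the continuity equation is renormalized by the entropy $P$ with the anomalous term $p(\vr)\diverg u$ realized as this weak-$*$ limit. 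Adding this identity to the display above gives $r_3^\eps+s^\eps\to0$, hence $r_1^\eps+\dots+s^\eps\to0$ (the limit being subsequence-independent), and~\eqref{eq:energyidentity} follows.

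The $P$-renormalization of the continuity equation is the crux. The DiPerna--Lions argument is out of reach since $u$ is merely Besov, not $W^{1,1}_\loc$; and the naive commutator estimate for $s^\eps$ would require $P'$ bounded on $[\underline\vr,\overline\vr]$ together with $\alpha+2\beta>1$, which collapses once the density reaches vacuum and $P'$ blows up there. The point of assuming $\diverg u\in\mathcal{M}$ is precisely that it confines all the dangerous contributions to $(\diverg u)^\eps$ --- bounded in $\mathcal{M}$ --- multiplied by quantities ($p^\eps(\vr)$, $P(\vr^\eps)$, $p(\vr^\eps)$) that remain bounded in $L^\infty$, thanks to $\vr\in L^\infty$ and $p\in C$; the Besov regularity of $\vr$ then provides $p(\vr^\eps)-p^\eps(\vr)\to0$ in $L^{3/2}_\loc$ and $\vr^\eps\to\vr$ a.e., which is exactly what is needed to pass to the limit against the absolutely continuous part of $\diverg u$ (via the chain rule for $P$ applied to the smooth $\vr^\eps$ and dominated convergence). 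I expect the interaction of the mollified density with the singular part of $\diverg u$ to be the genuinely delicate step, requiring the most care.
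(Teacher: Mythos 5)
Your reduction is carried out correctly as far as it goes --- the algebra combining $r_3^\eps$ and $s^\eps$ via the mollified continuity equation and the identity $\vr P'(\vr)=P(\vr)+p(\vr)$ is sound, as are the convergences of the harmless terms --- but the proof is not complete, and you say so yourself: everything is reduced to the renormalization identity $\partial_t P(\vr)+\diverg(P(\vr)u)=-\mu$ with $\mu=\text{w*-}\lim_{\eps\to0} p^\eps(\vr)(\diverg u)^\eps$, which is then announced as ``the crux'' and never proved. This identity is not an auxiliary fact; it carries the entire difficulty of the theorem. Any attempt to prove it by mollifying the continuity equation and multiplying by $P'(\vr^\eps)$ regenerates exactly the commutator $s^\eps=P'(\vr^\eps)\diverg(\vr^\eps u^\eps-(\vr u)^\eps)$ you started from, whose naive estimate fails precisely because $P'$ is unbounded near vacuum when $p$ is merely continuous; so the argument is circular. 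There is a further obstruction specific to your formulation: the natural limit produced by such a mollification is $\text{w*-}\lim p(\vr^\eps)(\diverg u)^\eps$, and identifying it with your $\mu$ would require $p(\vr^\eps)-p^\eps(\vr)\to0$ \emph{uniformly} (it only tends to zero in $L^r$ for $r<\infty$), since $(\diverg u)^\eps$ is merely bounded in $L^1$. So the singular part of $\diverg u$, which you flag as delicate, is indeed where the proposal stops.

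The paper avoids the renormalization question entirely by a much softer use of the hypothesis $\diverg u\in\mathcal{M}$. One takes $p^\delta\in C^2[\underline\vr,\overline\vr]$ with $\|p^\delta-p\|_{L^\infty}\le\delta$ (arranged so that Theorem~\ref{thm:compressibleonsager} applies), writes the mollified momentum equation with $p^\delta(\vr)$ and the error $\nabla[(p^\delta(\vr))^\eps-p^\eps(\vr)]$ on the right, and applies the known $C^2$ result for each fixed $\delta$ to get the energy identity with $p^\delta, P^\delta$. The $\delta$-limit of the left-hand side is immediate from uniform convergence, and the only place the measure hypothesis is used is to bound the error term after integration by parts: $\bigl|\int\phi\,[p^\delta(\vr)-p(\vr)]^\eps(\diverg u)^\eps\bigr|\le\|\phi\|_{C^0}\|p^\delta-p\|_{L^\infty}\|\diverg u\|_{TV}\le C\delta$. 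In other words, the measure structure of $\diverg u$ is needed only against a quantity that is \emph{uniformly small}, not against the full commutator $p(\vr^\eps)-p^\eps(\vr)$; this sidesteps exactly the $L^\infty$-versus-$L^1$ duality problem that blocks your final step. If you want to salvage your route, you would have to prove the renormalization identity by this same $p^\delta$-approximation, at which point you have reproduced the paper's argument with extra steps.
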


\begin{proof}
	Take a sequence $p^\delta\in C^2[\underline{\rho}, \overline{\rho}]$ that converges uniformly to $p\in C[\underline{\rho}, \overline{\rho}]$, that is, for each $\delta>0$
	\begin{equation}
	\|p-p^\delta\|_{L^\infty}\le \delta. 
	\end{equation}
	Then using $p^\delta$ in~\eqref{eq:mollifiedmomentum} we have 
	\begin{equation}\label{eq:pdeltamollifiedsplit}
	\partial_t(\rho u)^\eps+\diverg(\rho u\otimes u)^\eps+\nabla (p^\delta(\rho))^\eps=\nabla [(p^\delta(\rho))^\eps-p^\eps(\rho)]. 
	\end{equation}
	Now the left-hand side of the last equality satisfies all the conditions of Theorem~\ref{thm:compressibleonsager}, so for each fixed $\delta>0$ we have, in the limit as $\eps\to0$, 
	\begin{equation}\label{eq:Localenergydeltaequation}
	\partial_t\left(\frac{1}{2}\rho|u|^2+P^\delta(\rho)\right) + \diverg\left[\left(\frac{1}{2}\rho|u|^2+p^\delta(\rho)+P^\delta(\rho)\right)u\right] ,
	\end{equation}
	where
	\begin{equation}
	P^\delta(\rho)\coloneqq \rho\int_1^\rho\frac{p^\delta(r)}{r^2}dr.
	\end{equation}
	We will now show that~\eqref{eq:Localenergydeltaequation} converges as $\delta\to 0$ in the sense of distributions on $(0,T)\times \T^d$ to
	\begin{equation}
	\partial_t\left(\frac{1}{2}\rho|u|^2+P(\rho)\right)+\diverg\left[\left(\frac{1}{2}\rho|u|^2+p(\rho)+P(\rho)\right)u\right].
	\end{equation}
	Let $\phi\in\DC$.  From the choice of $p^\delta$ we have
	\begin{equation}\label{eq:pdeltabound}
	\left|\int^T_0\intT{\nabla \phi \cdot (p^\delta(\rho)-p(\rho))u}\dt\right|\leq
	C\|\phi\|_{\Ccal^1}\|p^\delta-p\|_{L^\infty}\|u\|_{L^3}\le C(\phi,u)\delta.
	\end{equation}
	
	\noindent For the terms containing $P^\delta(\rho)$ notice that 
	\begin{equation}
	|P^\delta(\rho)-P(\rho)|\le \rho \int^{\rho}_1\frac{|p^\delta(r) -p(r)|}{r^2}\d r\le\|p^\delta -p\|_{L^\infty}\;\rho \left|\int^{\rho}_1\frac{1}{r^2}\d r\right|\leq(1+\rho)\|p^\delta -p\|_{L^\infty}. 
	\end{equation}
	Hence we can estimate
	\begin{equation}
	\left|\int_0^T\intT{\partial_t\phi\; (P^\delta(\rho)-P(\rho))}dt\right|\leq C\norm{\phi}_{\Ccal^1}(1+\norm{\rho}_{L^1})\delta\le C(\phi)\delta,
	\end{equation}
	and similarly for the divergence term. It follows that both terms of~\eqref{eq:Localenergydeltaequation} containing $P^\delta$ converge as $\delta\to0$ to the corresponding terms for $P$.
	
	The final step of the proof is to consider the term coming into~\eqref{eq:smoothEnergy} from the right-hand side of~\eqref{eq:pdeltamollifiedsplit}. We need to show that
	\[
	\nabla [(p^\delta(\rho))^\eps-p^\eps(\rho)] \cdot u^\eps
	\]
	converges to zero in the sense of distributions on $(0,T)\times\T^d$ as first $\eps $ and  then $\delta$ tend to zero. Multiplying by $\phi\in\DC$, integrating over time and space, and integrating by parts we obtain
	\begin{align}\label{eq:pressurecommutator}
	\int^T_0\int_{\T^d}\nabla [(p^\delta(\rho))^\eps-p^\eps(\rho)]\phi u_\eps \d x\d t=-&\int^T_0\int_{\T^d} [(p^\delta(\rho))^\eps-p^\eps(\rho)]\phi  \diverg u^\eps \d x\d t
	\\-&\int^T_0\int_{\T^d} [(p^\delta(\rho))^\eps-p^\eps(\rho)]\nabla\phi\cdot u^\eps \d x\d t.
	\end{align}
	For the second term on the right-hand side of the last equality we see that 
	\begin{align}
	\left|\int^T_0\int_{\T^d} [(p^\delta(\rho))^\eps-p^\eps(\rho)]\nabla\phi\cdot u^\eps \d x\d t\right|&=\left|\int^T_0\int_{\T^d} [p^\delta(\rho)-p(\rho)]^\eps\nabla\phi\cdot u^\eps \d x\d t\right|
	\\&\le C\|\phi\|_{C^1}\|(p^\delta-p)^\eps\|_{L^\infty}\|u\|_{L^3}
	\\&\le C\|\phi\|_{C^1}\|p^\delta-p\|_{L^\infty}\|u\|_{L^3}\le C\delta.
	\end{align}
	Finally, for the first term on the right-hand side of~\eqref{eq:pressurecommutator} we invoke the assumption that $\diverg u$ is a bounded Radon measure to see that  
	\begin{align}
	\left|\int^T_0\int_{\T^d} \phi[(p^\delta(\rho))^\eps-p^\eps(\rho)] \diverg u^\eps \d x\d t\right|&= \left|\int^T_0\int_{\T^d} \phi[p^\delta(\rho)-p(\rho)]^\eps (\diverg u)^\eps \d x\d t\right|
	\\&\le\|\phi\|_{C^0}\|(p^\delta-p)^\eps\|_{L^\infty}\|(\diverg u)^\eps \|_{L^1}
	\\&\le\|\phi\|_{C^0}\|p^\delta-p\|_{L^\infty}\|\diverg u \|_{TV}\le C \delta
	\end{align}
	and so we are done.
\end{proof}


\subsection{Application to the compressible Navier-Stokes equations}\label{NSE}

When studying the result of Theorem \ref{thm:compressibleonsagerDivMeas}  we see that the condition 
$	\diverg u \in \mathcal{M}((0,T)\times \T^d)$  is  quite a strong  assumption for solutions to the compressible Euler equations, 
however, it is given for the compressible Navier-Stokes equations where one obtains a-priori from the diffusion term that $u\in L^2(0,T;H^1)$. Therefore a natural question to ask is what happens when we consider the solutions to the compressible Navier-Stokes   equations with vacuum, and how these results relate to the current results by Yu in \cite{Yu}. 

The compressible Navier-Stokes equations  are given by 
\begin{align}\label{eq:compressibleNavierStokes}
\partial_t(\rho u)+\diverg(\rho u\otimes u)+\nabla p(\rho)&=\diverg \mathbb{S}(\nabla u),\\
\partial_t\rho+\diverg(\rho u)&=0,\\
\mathbb{S}(\nabla u):= \mu\Big (\nabla u +(\nabla u)^T&-\frac{2}{3}\diverg u \mathbb{I}\Big )+\nu \diverg u \mathbb{I}
\end{align}
where we have the constants $\nu>0$ and $\eta\ge 0$. Here we will use the main properties that $\mathbb{S}(\nabla u)$ is symmetric and positive definite.
For degenerate viscosity, the momentum equation becomes, instead,
\begin{equation}\label{eq:compressibleNavierStokesDegenerate}
\partial_t(\rho u)+\diverg(\rho u\otimes u)+\nabla p(\rho)=\diverg(\rho \mathbb{S}(\nabla u)).
\end{equation}

\begin{corollary}\label{corollary:compressibleNavierStokes}
		Let $\rho$, $u$ be a solution of~\eqref{eq:compressibleNavierStokes} or ~\eqref{eq:compressibleNavierStokesDegenerate} in the sense of distributions. Assume 
	\begin{align}
	&u\in B_3^{\alpha,\infty}((0,T)\times\Tbb^d),\hspace{0.3cm} u \in L^2(0,T;H^1(\T^d)),\hspace{0.3cm}\rho, \rho u\in B_3^{\beta,\infty}((0,T)\times\Tbb^d),\\
	&0 \leq \underline{\rho} \leq \rho \leq \overline{\rho} \ \mbox{a.e. in } (0,T)\times\Tbb^d,
	\end{align}
	for some constants $\underline{\vr}$, $\overline{\vr}$, and
	$0\leq\alpha,\beta\leq1$ such that
	\begin{equation}\label{eq:alphabeta}
	\beta > \max \left\{ 1 - 2 \alpha; \frac{1 - \alpha}{2} \right\}.
	\end{equation}
	Assume further that $p \in C[\underline{\vr}, \overline{\vr}]$.
%
	Then the energy is locally conserved, i.e.
	\begin{equation}\label{eq:NavierStokesenergyidentity}
	\partial_t\left(\frac{1}{2}\rho|u|^2+P(\rho)\right)\hspace{-0.01cm}+\hspace{-0.01cm}\mathbb{S}(\nabla u): \nabla u\hspace{-0.01cm}+\hspace{-0.01cm}
	\diverg\left[\left(\frac{1}{2}\rho|u|^2+p(\rho)+P(\rho)+\mathbb{S}(\nabla u)\right)u\right]=0,
	\end{equation}
	 for \eqref{eq:compressibleNavierStokes} and 
	 \begin{multline}\label{eq:DegenerateNavierStokesenergyidentity}
	 \partial_t\left(\frac{1}{2}\rho|u|^2+P(\rho)\right)+\rho\mathbb{S}(\nabla u): \nabla u\\+
	 \diverg\left[\left(\frac{1}{2}\rho|u|^2+p(\rho)+P(\rho)+\rho\mathbb{S}(\nabla u)\right)u\right]=0,
	 \end{multline}
	 for \eqref{eq:compressibleNavierStokesDegenerate},
	 in the sense of distributions on $(0,T)\times\Tbb^d$.
\end{corollary}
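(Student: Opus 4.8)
The plan is to deduce the corollary from Theorem~\ref{thm:compressibleonsagerDivMeas}. The extra hypothesis $u\in L^2(0,T;H^1(\T^d))$ gives $\diverg u\in L^2((0,T)\times\T^d)\subset\Mcal((0,T)\times\T^d)$; hence every estimate in the proof of Theorem~\ref{thm:compressibleonsagerDivMeas} applies verbatim, in particular the uniform approximation of the merely continuous pressure $p$ by functions $p^\delta\in C^2[\underline{\vr},\overline{\vr}]$ and the fact that the pressure commutator $\nabla[(p^\delta(\rho))^\eps-p^\eps(\rho)]\cdot u^\eps$ vanishes in the sense of distributions as first $\eps\to0$ and then $\delta\to0$. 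The only genuinely new ingredient is the viscous term on the right-hand side of the momentum equation.

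First I would redo the derivation leading to~\eqref{eq:smoothEnergy}, now starting from the mollification of the momentum equation of~\eqref{eq:compressibleNavierStokes} (resp.~\eqref{eq:compressibleNavierStokesDegenerate}) with $p^\delta$ in place of $p$. For~\eqref{eq:compressibleNavierStokes}, since $\mathbb{S}$ is a constant-coefficient linear operator, mollification commutes with it and the viscous term is simply $\diverg\mathbb{S}(\nabla u^\eps)$, which is smooth on $\Omega^\eps$; for~\eqref{eq:compressibleNavierStokesDegenerate} one instead mollifies the function $\rho\mathbb{S}(\nabla u)$, which lies in $L^2((0,T)\times\T^d)$ because $\rho\le\overline{\vr}$ and $\nabla u\in L^2$. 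Multiplying the mollified momentum equation by $u^\eps$ and proceeding exactly as before, one obtains the analogue of~\eqref{eq:smoothEnergy} with the additional term
\[
(\diverg\mathbb{S}(\nabla u^\eps))\cdot u^\eps=\diverg\bigl(\mathbb{S}(\nabla u^\eps)\,u^\eps\bigr)-\mathbb{S}(\nabla u^\eps):\nabla u^\eps
\]
on the left-hand side (and the same expression with $\mathbb{S}(\nabla u^\eps)$ replaced by $(\rho\mathbb{S}(\nabla u))^\eps$ in the degenerate case).

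It remains to pass to the limit. The terms $r_1^\eps$, $r_2^\eps$, $s^\eps$ and the pressure commutator vanish distributionally, and the remaining terms of~\eqref{eq:smoothEnergy} converge to the "Euler part" of~\eqref{eq:NavierStokesenergyidentity}, exactly as in Theorems~\ref{thm:compressibleonsager} and~\ref{thm:compressibleonsagerDivMeas}; these steps use only the Besov bounds on $\rho$, $u$, $\rho u$ and are untouched by the viscosity. For the new term one notes that $\nabla u^\eps=(\nabla u)^\eps\to\nabla u$ strongly in $L^2((0,T)\times\T^d)$ and $u^\eps\to u$ strongly in $L^3((0,T)\times\T^d)$; hence $\mathbb{S}(\nabla u^\eps):\nabla u^\eps\to\mathbb{S}(\nabla u):\nabla u$ in $L^1$ and $\mathbb{S}(\nabla u^\eps)\,u^\eps\to\mathbb{S}(\nabla u)\,u$ in $L^{6/5}$, so $\diverg(\mathbb{S}(\nabla u^\eps)u^\eps)\to\diverg(\mathbb{S}(\nabla u)u)$ in the sense of distributions; in the degenerate case one uses in addition $(\rho\mathbb{S}(\nabla u))^\eps\to\rho\mathbb{S}(\nabla u)$ strongly in $L^2$. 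Collecting the limits and rearranging gives~\eqref{eq:NavierStokesenergyidentity}, respectively~\eqref{eq:DegenerateNavierStokesenergyidentity}; note that the dissipation density $\mathbb{S}(\nabla u):\nabla u$ (resp.\ $\rho\mathbb{S}(\nabla u):\nabla u$) and the viscous flux indeed define distributions, lying in $L^1$ and $L^{6/5}$ respectively.

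I do not expect a serious obstacle here: the point of the statement is precisely that the parabolic a~priori bound $u\in L^2(0,T;H^1)$ supplies for free the divergence-measure property required by Theorem~\ref{thm:compressibleonsagerDivMeas}. The only place that needs care is the passage to the limit in the viscous term, where the \emph{strong} $L^2$-convergence of $\nabla u^\eps$ is essential to identify the quadratic expression $\mathbb{S}(\nabla u^\eps):\nabla u^\eps$ with the dissipation density; in the degenerate case one additionally relies on the boundedness of $\rho$, both to keep $\rho\mathbb{S}(\nabla u)$ in $L^2$ and to make the viscous flux a well-defined distribution.
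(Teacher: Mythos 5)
Your proposal is correct, and for the Euler part and the non-degenerate viscosity it coincides with the paper's argument: the paper likewise observes that $u\in L^2(0,T;H^1)$ makes $\diverg u$ a bounded measure so that Theorem~\ref{thm:compressibleonsagerDivMeas} handles all the inviscid terms, and treats the linear viscous term by the same integration by parts $-\diverg \mathbb{S}(\nabla u^\eps)\cdot u^\eps\phi \leadsto \mathbb{S}(\nabla u^\eps):\nabla u^\eps\,\phi + (\mathbb{S}(\nabla u^\eps)u^\eps)\cdot\nabla\phi$ followed by the strong $L^2$ convergence of $\nabla u^\eps$. Where you genuinely diverge is the degenerate case: the paper first replaces $(\rho\,\mathbb{S}(\nabla u))^\eps$ by $\rho^\eps\mathbb{S}(\nabla u^\eps)$, which forces it to introduce and estimate an extra commutator $r_d^\eps=\int\diverg\bigl(\rho^\eps\mathbb{S}(\nabla u^\eps)-(\rho\mathbb{S}(\nabla u))^\eps\bigr)\cdot\phi u^\eps$ via the pointwise decomposition~\eqref{eq:pointwisedecomp}, ultimately still relying on $\|\mathbb{S}(\nabla u^\eps)-\mathbb{S}(\nabla u)\|_{L^2}\to0$. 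You instead keep $(\rho\,\mathbb{S}(\nabla u))^\eps$ as a single mollified object, integrate by parts, and pass to the limit directly using $(\rho\mathbb{S}(\nabla u))^\eps\to\rho\mathbb{S}(\nabla u)$ in $L^2$ (available since $\rho\in L^\infty$ and $\nabla u\in L^2$) together with $\nabla u^\eps\to\nabla u$ in $L^2$ and $u^\eps\to u$ in $L^3$. This is legitimate and in fact shorter, since the viscous term is linear in the composite quantity $\rho\,\mathbb{S}(\nabla u)$ and no commutator ever needs to appear; the paper's route has the mild advantage of exhibiting the limit as the product of the separately identified factors $\rho$ and $\mathbb{S}(\nabla u):\nabla u$, but both arguments rest on the same a priori bound and yield~\eqref{eq:DegenerateNavierStokesenergyidentity}.
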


\begin{remark}
	The condition $\diverg u \in \mathcal{M}$ is trivially satisfied if we assume that $u\in L^2(0,T;H^1)$ and so does not appear in the statement of Corollary \ref{corollary:compressibleNavierStokes}.
\end{remark}

\begin{remark}
	For $d\le 3$ we can use Besov embedding theorems, see \cite{bahouri2011fourier}, to observe that $H^1\hookrightarrow B^{1,\infty}_2\hookrightarrow B_3^{\frac{2}{3},\infty}$ and so assuming that $u\in B^{\alpha_1,\infty}_3(0,T;B^{\alpha_2,\infty}_3)$ and $\rho, \rho u\in B_3^{\beta_1,\infty}(0,T;B_3^{\beta_2,\infty})$ we have the same assumptions  on the pairs $(\alpha_1,\beta_1)$ and $(\alpha_2,\beta_2)$ as \eqref{eq:alphabeta} but can assume that $\alpha_2\ge\frac{2}{3}$ and remove the assumption that $u\in L^2(0,T;H^1)$. 
\end{remark}


\begin{proof}
We only have to consider the extra term
$\diverg \mathbb{S}(\nabla u)$ in the derivation of the local energy equality that we performed previously. We see that
\begin{multline}
-\int^T_0\int_{\T^d} \diverg \mathbb{S}(\nabla u^\eps)\cdot u^\eps \phi \d x \d t= \int^T_0\int_{\T^d}  \mathbb{S}(\nabla u^\eps):\nabla u^\eps \phi \d x \d t\\
+\int^T_0\int_{\T^d}  (\mathbb{S}(\nabla u^\eps)u^\eps)\cdot \nabla \phi \d x \d t
\end{multline}
and so obtain \eqref{eq:NavierStokesenergyidentity}. For \eqref{eq:DegenerateNavierStokesenergyidentity} we perform the same calculation as above however with an extra $\rho$ in the equation, the diffusion term is no longer linear and thus we pick up an extra commutator estimate
\begin{equation}
r_d^\eps:=\int^T_0\int_{\T^d}\diverg (\rho^\eps\mathbb{S}(\nabla u^\eps)-(\rho\mathbb{S}(\nabla u))^\eps)\cdot\phi u^\eps\d x\d t.
\end{equation}  
We can perform an integration by parts to obtain
\begin{align}\label{eq:degenerateviscositycomutator}
|r_d^\eps|&\le \left|\int^T_0\int_{\T^d} [(\rho^\eps\mathbb{S}(\nabla u^\eps)-(\rho\mathbb{S}(\nabla u))^\eps) u^\eps]\cdot\nabla\phi\d x\d t\right|\\
&+\left|\int^T_0\int_{\T^d} (\rho^\eps\mathbb{S}(\nabla u^\eps)-(\rho\mathbb{S}(\nabla u))^\eps) :\nabla u^\eps\phi\d x\d t\right|.
\end{align}
Note
the pointwise identity where for any two functions $f,g$ we have that
\begin{equation}\label{eq:pointwisedecomp}
\begin{aligned}
f^\eps g^\eps&-(f g)^\eps= (f^\eps-f)(g^\eps-g)\\
&-\int_{-\eps}^\eps\int_{\T^d}\eta^\eps(\tau,\xi)(f(t-\tau,x-\xi)-f(t,x))(g(t-\tau,x-\xi)-g(t,x)) d\xi d\tau.
\end{aligned}
\end{equation}
Applying this allows us to split the two terms on the R.H.S. of \eqref{eq:degenerateviscositycomutator} into four more terms which we can estimate.  
We focus on the first of these terms only, as the other terms produce the same estimates, after applying Fubini's theorem, as seen in \cite{FGSW}. We see that
\begin{align} 
|r_d^\eps|&\le \left|\int^T_0\int_{\T^d} [(\rho^\eps-\rho)(\mathbb{S}(\nabla u^\eps)-\mathbb{S}(\nabla u)) u^\eps]\cdot\nabla\phi\d x\d t\right|\\
&+\left|\int^T_0\int_{\T^d} (\rho^\eps-\rho)(\mathbb{S}(\nabla u^\eps)-\mathbb{S}(\nabla u)) :\nabla u^\eps\phi\d x\d t\right|
\\
&\le \|\phi\|_{C^1}\|\rho\|_{L^\infty}\|u\|_{L^2}\|\mathbb{S}(\nabla u^\eps)-\mathbb{S}(\nabla u)\|_{L^2}\\ &+\|\phi\|_{C^0}\|\rho\|_{L^\infty}\|\nabla u\|_{L^2}\|\mathbb{S}(\nabla u^\eps)-\mathbb{S}(\nabla u)\|_{L^2}.
\end{align}
Using the a-priori estimate that $u\in L^2(0,T;H^1)$ we see that 
$\|\mathbb{S}(\nabla u^\eps)-\mathbb{S}(\nabla u)\|_{L^2}\to 0$ as $\eps\to 0$ and thus $r_d^\eps\to 0$ as $\eps\to 0$.
\end{proof}

The  work of Cheng Yu in \cite{Yu} also studies energy conservation for the compressible Navier-Stokes systems where a vacuum could occur. The result in \cite{Yu} treats the case where $p(\rho)=\rho^\gamma$ for $\gamma>1$ and thus where $p\in C^{1,\gamma-1}$, with strong assumptions of spacial regularity where
\begin{equation}
\sqrt{\rho}\nabla u \in L^2(0,T;L^2(\Omega)) \quad \text{and} \quad \frac{\nabla \rho}{\sqrt{\rho}}\in L^\infty(0,T;L^2(\Omega))
\end{equation}
among other assumptions, see \cite{Yu} for more details. However, \cite{Yu} only assumes integrability in time. The condition $\frac{\nabla \rho}{\sqrt{\rho}}\in L^\infty(0,T;L^2(\Omega))$ restricts the allowable vacuum cases and will only allow vacuum on measure zero sets with a nice approach to this set. The result presented here complements the result in \cite{Yu} as we show that by assuming some differential regularity in time for both $\rho$ and $u$ then we can weaken the spacial regularity assumptions and only need continuity of the pressure $p$. Specifically, we can have vacuum on measurable subsets of the domain where the approach to this set can be quite generic.

\section{Energy Conservation assuming H\"{o}lder continuity of the pressure}\label{section:Holdercontinuity}

\medskip
For the next result we fix $1<\gamma<2$ and we will assume that the pressure $p$ is of class $C^{1,(\gamma-1)}$, 
thus relaxing the regularity assumption of Theorem~\ref{thm:compressibleonsager}. The expense of this relaxation is that we require $\alpha +\gamma \beta>1$ where before we only needed $\alpha +2\beta>1$.

\begin{theorem}\label{thm:compressibleonsagerCGamma}
	Let $\rho$, $u$ be a solution of~\eqref{eq:compressibleEuler} in the sense of distributions. Assume 
	\begin{align}
	u&\in B_p^{\alpha,\infty}((0,T)\times\T^d),\hspace{0.3cm}\rho,\rho u\in B_q^{\beta,\infty}((0,T)\times\T^d),\\
	0 &\leq \underline{\rho} \leq \rho \leq \overline{\rho} \ \mbox{a.e. (t,x) in}\; (0,T)\times\Tbb^d, 
	\end{align}
	for some constants $\underline{\vr}$, $\overline{\vr}$ and $0\leq\alpha,\beta\le1$ such that,
	\begin{equation}
	\frac1p + \frac2q \le 1,\qquad \frac2p + \frac1q \le 1,\qquad p,q\ge 2,  \qquad 	\alpha+\gamma\beta>1\qquad \text{and}\qquad
	2\alpha+\beta>1.
	\end{equation}
	Define  $\mathcal{B}_{\eps^\beta}:=\{x:0<\rho^\eps(x)<\eps^\beta \text { and } \rho\neq0\}$ and assume that 
	\begin{equation}\label{eq:mollifierratiocondition}
 \left\|\frac{\rho^\eps-\rho}{\rho^\eps}\right\|_{L^q(\mathcal{B}_{\eps^\beta})}\le C(\rho),
	\end{equation}
	where $C$ does not depend on $\eps$. Assume further that $p\in C^{1,(\gamma-1)}([\underline{\rho},\overline{\rho}])$, and, in addition 
	\begin{equation}
	p'(0)=0\quad \text{as soon as}\quad \underline{\rho}=0.
	\end{equation}
	Then the energy is locally conserved, i.e.~\eqref{eq:energyidentity} holds
	in the sense of distributions on $(0,T)\times\Tbb^d$.
\end{theorem}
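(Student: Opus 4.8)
The plan is to follow the standard mollification scheme laid out in Section~\ref{besov}: mollify the Euler system, obtain the regularized local energy balance~\eqref{eq:smoothEnergy}, and show that each commutator term $r_1^\eps, r_2^\eps, r_3^\eps, s^\eps$ converges to zero in the sense of distributions as $\eps\to 0$. The terms $r_1^\eps$ and $r_2^\eps$ are exactly as in~\cite{FGSW}: they involve only the quadratic nonlinearities $\rho u\otimes u$ and $\rho u$, so the pointwise decomposition~\eqref{eq:pointwisedecomp} together with the Besov estimates~\eqref{besoveps}, \eqref{besovepsgradient} and the conditions $2\alpha+\beta>1$, $\tfrac1p+\tfrac2q\le 1$, $\tfrac2p+\tfrac1q\le 1$ gives their vanishing; I would simply cite that argument. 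So the entire novelty is in controlling the pressure commutator $r_3^\eps = \nabla(p(\rho^\eps)-p^\eps(\rho))\cdot u^\eps$ and the term $s^\eps = \diverg(\rho^\eps u^\eps - (\rho u)^\eps)\,P'(\rho^\eps)$ without assuming $p\in C^2$.

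For $r_3^\eps$, after testing against $\phi\in\DC$ and integrating by parts one gets, as in~\eqref{eq:pressurecommutator}, two pieces: one with $(p(\rho^\eps)-p^\eps(\rho))\nabla\phi\cdot u^\eps$ and one with $(p(\rho^\eps)-p^\eps(\rho))\phi\,\diverg u^\eps$. The key is to estimate $\|p(\rho^\eps)-p^\eps(\rho)\|$ in a suitable $L^r$ space using only $p\in C^{1,\gamma-1}$. Write $p(\rho^\eps)-p^\eps(\rho) = (p(\rho^\eps)-p(\rho)) + (p(\rho)-p^\eps(\rho))$ and handle each. Since $p\in C^{1,\gamma-1}$, $p$ is Lipschitz on $[\underline\rho,\overline\rho]$, so $|p(\rho)-p^\eps(\rho)| \le \|p'\|_\infty |\rho-\rho^\eps|$ pointwise, giving $\|p(\rho)-p^\eps(\rho)\|_{L^q} \le C\eps^\beta$ by~\eqref{besoveps} — but that single power of $\eps^\beta$ is not enough when paired with the $\eps^{\alpha-1}$ blow-up of $\nabla u^\eps$; one needs $\beta+\alpha>1$, not $\gamma\beta+\alpha>1$. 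This is where the region $\mathcal{B}_{\eps^\beta}$ and hypothesis~\eqref{eq:mollifierratiocondition} enter: the gain comes from $p(\rho^\eps)-p(\rho)$. Using $p\in C^{1,\gamma-1}$ with $p'(0)=0$ in the vacuum case, a Taylor-type estimate gives $|p(a)-p(b)| \le C|a-b|(|a|^{\gamma-1}+|b|^{\gamma-1})$ — more precisely $|p(a)-p(b)|\le C|a-b|^{\gamma}$ when one of the arguments is near zero, and $|p(a)-p(b)|\le C|a-b|$ away from vacuum. I would split the domain into (i) $\{\rho = 0\}$, where $\rho^\eps$ is typically also small and one uses $|p(\rho^\eps)-p^\eps(\rho)| = |p(\rho^\eps)| \le C|\rho^\eps|^\gamma \le C|\rho^\eps-\rho|^\gamma$; (ii) $\mathcal{B}_{\eps^\beta}$, where $0<\rho^\eps<\eps^\beta$ and $\rho\neq 0$ — here one writes $p(\rho^\eps)-p(\rho)$ using the mean value theorem and the Hölder modulus, but since $\rho$ itself need not be small, the bound is $|p(\rho^\eps)-p(\rho)|\le C|\rho^\eps-\rho|\cdot\frac{|\rho^\eps-\rho|^{\gamma-1}+\dots}{\,}$, and factoring $\rho^\eps$ out forces the appearance of $\frac{\rho^\eps-\rho}{\rho^\eps}$, whose $L^q(\mathcal{B}_{\eps^\beta})$ norm is bounded by~\eqref{eq:mollifierratiocondition}; (iii) $\{\rho^\eps\ge \eps^\beta\}$, where $\rho^\eps$ is bounded below, so $P'(\rho^\eps)=\int_1^{\rho^\eps}p(r)/r^2\,dr + p(\rho^\eps)/\rho^\eps$ stays bounded and a Taylor expansion of $p$ to second order is legitimate with the $C^{1,\gamma-1}$ remainder controlled by $\eps^{\gamma\beta}$ wherever $\rho^\eps$ is away from zero — this is exactly the regime producing $\gamma\beta+\alpha>1$. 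Carefully combining these, using Hölder with the exponents $p,q$ on the products with $u^\eps$ and $\diverg u^\eps$ (the latter controlled by $\|\nabla u^\eps\|_{L^p}\le C\eps^{\alpha-1}$), one sees each contribution is $O(\eps^{\alpha+\gamma\beta-1})\to 0$ or $O(\eps^{\gamma\beta})\to 0$.

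For $s^\eps$, the factor $\diverg(\rho^\eps u^\eps-(\rho u)^\eps)$ is a standard continuity-equation commutator, estimated via~\eqref{eq:pointwisedecomp} by $C\eps^{\alpha+\beta-1}$ in an appropriate norm; the new issue is the multiplier $P'(\rho^\eps)$, which blows up like $(\rho^\eps)^{\gamma-1-1}=(\rho^\eps)^{\gamma-2}$ near vacuum when $\gamma<2$. One again partitions by the size of $\rho^\eps$: on $\{\rho^\eps\ge\eps^\beta\}$ we have $|P'(\rho^\eps)|\le C(\rho^\eps)^{\gamma-1}+C \le C\eps^{\beta(\gamma-2)}+C$ (or better, one uses the cancellation in $\nabla(P'(\rho^\eps))$ from~\eqref{eq:smoothmomentum2} rather than $s^\eps$ directly), producing a power $\eps^{\alpha+\beta-1+\beta(\gamma-2)} = \eps^{\alpha+\gamma\beta-\beta-1}$, which needs correcting — in fact the cleaner route, which~\cite{FGSW} uses and I would imitate, is to rewrite the combination $r_3^\eps$ and the pressure-related part of $s^\eps$ together so that only $\nabla(p(\rho^\eps)-p^\eps(\rho))$ and terms like $(p(\rho^\eps)-p^\eps(\rho))\nabla(\text{something bounded})$ appear, avoiding $P'$ entirely; on $\mathcal{B}_{\eps^\beta}$ and $\{\rho=0\}$ one falls back on~\eqref{eq:mollifierratiocondition} and the vacuum estimate $|p(\rho^\eps)|\le C|\rho^\eps|^\gamma$ as above. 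The main obstacle, and the heart of the proof, is precisely the bookkeeping on $\mathcal{B}_{\eps^\beta}$: showing that the a priori unbounded quantities $P'(\rho^\eps)$ or $\rho^\eps/(\rho^\eps-\rho)$-type ratios, which are exactly what fail without $C^2$, are tamed by hypothesis~\eqref{eq:mollifierratiocondition} combined with the Hölder modulus of $p$, and that the exponents balance to give the claimed condition $\alpha+\gamma\beta>1$. Once all four terms are shown to vanish distributionally, the left-hand side of~\eqref{eq:smoothEnergy} converges to that of~\eqref{eq:energyidentity} by dominated convergence (using $\underline\rho\le\rho\le\overline\rho$, $u^\eps\to u$ in $L^3_{\loc}$, and continuity of $p$ and $P$), completing the proof.
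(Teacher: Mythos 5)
Your high-level plan (only the pressure terms are new; split by the size of $\rho^\eps$; use hypothesis~\eqref{eq:mollifierratiocondition} on $\mathcal{B}_{\eps^\beta}$; exploit $p'(0)=0$ together with $p'\in C^{0,\gamma-1}$) matches the paper's, but your treatment of the pressure commutator $p(\rho^\eps)-p^\eps(\rho)$ has a genuine gap, and it sits exactly where the paper's key lemma lives. The paper proves (Lemma~\ref{lemma:Cgamma}) the \emph{global} bound $\norm{p^\eps(\rho)-p(\rho^\eps)}_{L^q}\le C\eps^{\gamma\beta}\norm{\rho}^\gamma_{B_q^{\beta,\infty}}$ with no domain splitting and no use of~\eqref{eq:mollifierratiocondition}. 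The mechanism is not a second-order Taylor expansion (which does not exist for a general $p\in C^{1,\gamma-1}$, not even away from vacuum, unless you tacitly assume $p(\rho)=\rho^\gamma$): it is the \emph{first-order} expansion with H\"older remainder, $|p(s)-p(s_0)-p'(s_0)(s-s_0)|\le C|s-s_0|^\gamma$, applied once pointwise to $p(\rho^\eps)-p(\rho)$ and once under the convolution to $p^\eps(\rho)-p(\rho)$; the identical linear terms $p'(\rho)(\rho^\eps-\rho)$ cancel, leaving only $C|\rho^\eps-\rho|^\gamma$ plus the mollification of $|\rho(x)-\rho(y)|^\gamma$, i.e.\ the $\gamma$-th power of the Besov increment, uniformly in the presence of vacuum. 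Your alternative route fails in each region: on $\{\rho=0\}$ you set $p^\eps(\rho)=p(0)$, but $p^\eps(\rho)(x)$ is the mollification of the composition and need not be small where $\rho(x)=0$; on $\mathcal{B}_{\eps^\beta}$ the Lipschitz bound $|p(\rho^\eps)-p(\rho)|\le C|\rho^\eps-\rho|$ combined with~\eqref{eq:mollifierratiocondition} only yields $\eps^\beta$ (since $\rho$, unlike $\rho^\eps$, need not be small there), which forces the stronger condition $\alpha+\beta>1$; and on $\{\rho^\eps\ge\eps^\beta\}$ the second-order expansion you invoke is unavailable. As written you therefore do not reach the claimed exponent $\alpha+\gamma\beta>1$ for this term.

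For $s^\eps$ you correctly locate the genuine singularity, but you conflate $P'$ and $P''$: $P'(\rho)$ stays bounded as $\rho\to0$ when $\gamma>1$; what blows up is $\nabla P'(\rho^\eps)=P''(\rho^\eps)\nabla\rho^\eps$ with $P''(\rho^\eps)=p'(\rho^\eps)/\rho^\eps$. The paper's bookkeeping keeps the factors $(\rho^\eps-\rho)(u^\eps-u)\sim\eps^{\alpha+\beta}$ and $\nabla\rho^\eps\sim\eps^{\beta-1}$ separate, uses $|p'(\rho^\eps)|\le C(\rho^\eps)^{\gamma-1}$ from $p'(0)=0$, and then on $\mathcal{C}_{\eps^\beta}$ pays $(\rho^\eps)^{\gamma-2}\le\eps^{\beta(\gamma-2)}$ for a total of $\eps^{\alpha+2\beta-1+\beta(\gamma-2)}=\eps^{\alpha+\gamma\beta-1}$, while on $\mathcal{B}_{\eps^\beta}$ it keeps $(\rho^\eps-\rho)/\rho^\eps$ intact (bounded in $L^q$ by hypothesis) and pays $(\rho^\eps)^{\gamma-1}\le\eps^{\beta(\gamma-1)}$, again $\eps^{\alpha+\gamma\beta-1}$. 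Your count $\eps^{\alpha+\gamma\beta-\beta-1}$, which you yourself flag as ``needs correcting'', drops one factor of $\eps^\beta$; the correction is exactly the above. That part is repairable, but the global commutator estimate of Lemma~\ref{lemma:Cgamma} is an essential idea that your argument is missing and cannot be replaced by the region-by-region bounds you propose.
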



Large part of the proof of this theorem is identical to the proof of Theorem \ref{thm:compressibleonsager}. In particular we regularize the balance equations to derive an energy balance for the smooth functions $\rho^\eps$ and $u^\eps$. Then we need to show that the corresponding commutator errors vanish in the limit $\eps\to0$. This is done in the same way as in~\cite{FGSW}, the only difference being in the terms involving the pressure. In particular, we will have to estimate an appropriate norm of the difference $p(\rho)^\eps-p(\rho^\eps)$. This will be done by means of the following lemma, which is an adaptation to our present case of the argument in~\cite[p.~10]{FGSW}, see also~\cite[Lemma~3.1]{GMSG}.

\begin{lemma}\label{lemma:Cgamma}
	Let $\gamma\in(1,2)$ and $p\in C^{1,\gamma-1}([a,b])$. If $\rho\in B_q^{\beta,\infty}(\Omega;[a,b])$, then
	\begin{equation}\label{eq:CGammaCommutatorEstimate}
	\norm{p^\eps(\rho)-p(\rho^\eps)}_{L^q}\leq C\eps^{\gamma\beta}\norm{\rho}^{\gamma}_{B_q^{\beta,\infty}}
	\end{equation}
\end{lemma}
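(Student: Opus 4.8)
The plan is to reduce the commutator estimate to a pointwise estimate on $p^\eps(\rho) - p(\rho^\eps)$ using the mollification structure, exactly in the spirit of the classical Constantin--E--Titi argument adapted to the fractional Hölder setting. First I would write, using $\int \eta^\eps = 1$,
\begin{equation*}
p^\eps(\rho)(x) - p(\rho^\eps(x)) = \int_{|y|\le \eps} \eta^\eps(y)\left[ p(\rho(x-y)) - p(\rho^\eps(x))\right]\,\mathrm{d}y,
\end{equation*}
and then I would compare $p(\rho(x-y))$ to $p(\rho^\eps(x))$ by using the $C^{1,\gamma-1}$ regularity of $p$. The key analytic input is the elementary inequality: for $p \in C^{1,\gamma-1}([a,b])$ and $s,t\in[a,b]$,
\begin{equation*}
|p(s) - p(t) - p'(t)(s-t)| \le [p']_{C^{0,\gamma-1}}\,|s-t|^\gamma ,
\end{equation*}
which follows from writing $p(s)-p(t) = \int_0^1 p'(t + \tau(s-t))(s-t)\,\mathrm{d}\tau$ and bounding $|p'(t+\tau(s-t)) - p'(t)| \le [p']_{C^{0,\gamma-1}}\,|s-t|^{\gamma-1}$. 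Applying this with $s = \rho(x-y)$, $t = \rho^\eps(x)$ and integrating against $\eta^\eps(y)$, the linear term $p'(\rho^\eps(x))\int \eta^\eps(y)(\rho(x-y) - \rho^\eps(x))\,\mathrm{d}y$ vanishes identically (again because $\int \eta^\eps = 1$ and $\rho^\eps(x) = \int \eta^\eps(y)\rho(x-y)\,\mathrm{d}y$), so we are left with
\begin{equation*}
|p^\eps(\rho)(x) - p(\rho^\eps(x))| \le [p']_{C^{0,\gamma-1}} \int_{|y|\le\eps}\eta^\eps(y)\,|\rho(x-y) - \rho^\eps(x)|^\gamma\,\mathrm{d}y .
\end{equation*}

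Next I would further bound $|\rho(x-y) - \rho^\eps(x)| \le |\rho(x-y) - \rho(x)| + |\rho(x) - \rho^\eps(x)|$, so that $|\rho(x-y) - \rho^\eps(x)|^\gamma \lesssim |\rho(x-y)-\rho(x)|^\gamma + |\rho(x)-\rho^\eps(x)|^\gamma$ (using $(A+B)^\gamma \le 2^{\gamma-1}(A^\gamma + B^\gamma)$ since $\gamma > 1$). Raising to the power $q$, integrating in $x$ over $\Omega$ (or the appropriate interior subdomain), and applying Minkowski's/Jensen's inequality in the $\eta^\eps(y)\,\mathrm{d}y$ average, I obtain
\begin{equation*}
\norm{p^\eps(\rho) - p(\rho^\eps)}_{L^q}^q \lesssim \int_{|y|\le\eps}\eta^\eps(y)\,\norm{\rho(\cdot - y) - \rho}_{L^{\gamma q}}^{\gamma q}\,\mathrm{d}y + \norm{\rho - \rho^\eps}_{L^{\gamma q}}^{\gamma q}.
\end{equation*}
The Besov estimate $\norm{\rho(\cdot-y) - \rho}_{L^q(\Omega\cap(\Omega-y))} \le |y|^\beta\norm{\rho}_{B_q^{\beta,\infty}}$ controls the $L^q$ shift, and combined with the uniform bound $\rho\in[a,b]$ (hence $\rho \in L^\infty$) one interpolates $\norm{\rho(\cdot-y)-\rho}_{L^{\gamma q}} \le \norm{\rho(\cdot-y)-\rho}_{L^q}^{1/\gamma}\norm{\rho(\cdot-y)-\rho}_{L^\infty}^{1-1/\gamma} \lesssim (|y|^\beta\norm{\rho}_{B_q^{\beta,\infty}})^{1/\gamma}$, so that $\norm{\rho(\cdot-y)-\rho}_{L^{\gamma q}}^{\gamma q} \lesssim |y|^{\gamma\beta q}\norm{\rho}_{B_q^{\beta,\infty}}^{\gamma q}$; similarly $\norm{\rho-\rho^\eps}_{L^{\gamma q}}^{\gamma q}\lesssim \eps^{\gamma\beta q}\norm{\rho}_{B_q^{\beta,\infty}}^{\gamma q}$ via \eqref{besoveps}. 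Since $|y|\le\eps$ in the support of $\eta^\eps$, both contributions are $\lesssim \eps^{\gamma\beta q}\norm{\rho}_{B_q^{\beta,\infty}}^{\gamma q}$, and taking $q$-th roots gives \eqref{eq:CGammaCommutatorEstimate}.

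The main subtlety I expect is handling the interpolation between $L^q$ and $L^\infty$ cleanly and making sure the domain-of-definition issues (the mollification $\rho^\eps$ is only defined on $\Omega^\eps$, and shifts require $\Omega\cap(\Omega-y)$) do not create problems; these are the same bookkeeping matters already dealt with in~\cite{FGSW}, so I would simply state the estimate on $\Omega^\eps$ (or equivalently remark that $L^\infty$ bounds make the interpolation step elementary since $\gamma q \ge q$). One could alternatively avoid interpolation entirely by noting $|\rho(x-y)-\rho^\eps(x)|^\gamma = |\rho(x-y)-\rho^\eps(x)|^{\gamma-1}|\rho(x-y)-\rho^\eps(x)| \le (2\norm{\rho}_{L^\infty})^{\gamma-1}|\rho(x-y)-\rho^\eps(x)|$ and then using the plain $L^q$ shift estimate, which is slightly lossy in the constant but gives exactly the same $\eps$-power $\eps^{\gamma\beta}$ — wait, this is wrong, since it would only yield $\eps^\beta$, so interpolation (or a direct $L^{\gamma q}$ shift bound combined with $L^\infty$) is genuinely needed to extract the full exponent $\gamma\beta$. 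Thus the honest route is the $L^{\gamma q}$ estimate above, and the one real point of care is recognizing that boundedness of $\rho$ upgrades the $B_q^{\beta,\infty}$ shift control to $B_{\gamma q}^{\beta,\infty}$-type control with the power $\gamma\beta$ rather than $\beta$.
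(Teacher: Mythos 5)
Your structural reduction is essentially the paper's own argument: the paper likewise starts from the Taylor-type bound $|p(s)-p(s_0)-p'(s_0)(s-s_0)|\le C|s-s_0|^{\gamma}$ for $p\in C^{1,\gamma-1}$ (expanding around $\rho(x)$ rather than $\rho^\eps(x)$, which after convolving in $y$ and cancelling the linear terms yields the same pointwise bound $|p^\eps(\rho)-p(\rho^\eps)|\le C|\rho-\rho^\eps|^{\gamma}+C\,(|\rho-\rho(\cdot)|^{\gamma}\ast\eta^\eps)$ that your triangle-inequality splitting produces), and then passes to $L^{\gamma q}$ norms of $\rho-\rho^\eps$ and of the shifts, exactly as you do. Up to that point your proposal is correct and matches the paper.

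The genuine gap is in your final interpolation step. From $\norm{f}_{L^{\gamma q}}\le\norm{f}_{L^q}^{1/\gamma}\norm{f}_{L^\infty}^{1-1/\gamma}$ you correctly obtain $\norm{\rho(\cdot-y)-\rho}_{L^{\gamma q}}\lesssim\bigl(|y|^{\beta}\norm{\rho}_{B_q^{\beta,\infty}}\bigr)^{1/\gamma}$, but raising this to the power $\gamma q$ gives $\bigl(|y|^{\beta}\norm{\rho}_{B_q^{\beta,\infty}}\bigr)^{q}$, not $|y|^{\gamma\beta q}\norm{\rho}_{B_q^{\beta,\infty}}^{\gamma q}$; after taking $q$-th roots your argument delivers only $\norm{p^\eps(\rho)-p(\rho^\eps)}_{L^q}\lesssim\eps^{\beta}\norm{\rho}_{B_q^{\beta,\infty}}$. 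This is exactly the lossy rate you yourself flagged as insufficient in your closing paragraph: interpolating against $L^\infty$ is algebraically identical to the crude bound $|f|^{\gamma}\le\norm{f}_{L^\infty}^{\gamma-1}|f|$ and cannot do better. To reach $\eps^{\gamma\beta}$ one needs the shifts to decay like $|y|^{\beta}$ measured in $L^{\gamma q}$ itself, i.e.\ $\rho\in B_{\gamma q}^{\beta,\infty}$, and since $\gamma q>q$ this does not follow from $\rho\in B_q^{\beta,\infty}\cap L^\infty$ on a bounded domain. The resolution used when the lemma is actually applied in the proof of Theorem~\ref{thm:compressibleonsagerCGamma} is to measure the commutator at a lower Lebesgue exponent: the estimate is invoked in $L^{q/2}$, so the relevant shift norm is $L^{\gamma q/2}$ with $\gamma q/2<q$, the embedding $B_q^{\beta,\infty}\hookrightarrow B_{\gamma q/2}^{\beta,\infty}$ is legitimate, and the full rate $\eps^{\gamma\beta}$ with the power $\norm{\rho}^{\gamma}$ comes out with no interpolation at all. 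As long as you insist on $L^q$ on the left-hand side, no argument using only $\rho\in B_q^{\beta,\infty}\cap L^\infty$ can produce the exponent $\gamma\beta$, so the step as you wrote it would fail.
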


\begin{proof}
First we note that by the fundamental theorem of calculus
\begin{align}
p(s)-p(s_0)=\int^s_{s_0} p'(t) \d t&=\int^s_{s_0} p'(s_0) \d t+\int^s_{s_0} p'(t)-p'(s_0) \d t\\
&=p'(s_0)(s-s_0)+\int^s_{s_0} p'(t)-p'(s_0) \d t.
\end{align}
Since $p'\in C^{0,\gamma-1}$, we have  
\begin{equation}
\left|\int^s_{s_0} p'(t)-p'(s_0) \d t\right|\le \int^s_{s_0} |p'(t)-p'(s_0)| \d t\le C\int^s_{s_0}\d t \sup_{t\in[s_0,s]}|t-s_0|^{\gamma-1}\le C|s-s_0|^{\gamma}.
\end{equation}
Thus, 
\begin{equation}
|p(s)-p(s_0)-p'(s_0)(s-s_0)|\le C|s-s_0|^{\gamma}.
\end{equation}
As the constant $C$ is independent of $s,s_0$ we see that 
\begin{equation}\label{eq:Taylor1}
|p(\rho^\eps)-p(\rho)-p'(\rho)(\rho^\eps-\rho)|\le C|\rho-\rho^\eps|^{\gamma},
\end{equation}
and similarly,
\begin{equation}\label{eq:Taylor2}
|p(\rho(y))-p(\rho(x))-p'(\rho(x))(\rho(y)-\rho(x))|\le C|\rho(x)-\rho(y)|^{\gamma}.
\end{equation}
 We can apply convolution against the function $\eta^\eps$ with respect to $y$ in \eqref{eq:Taylor2} and apply Jensen's inequality over the convolution integral to obtain
 \begin{equation}\label{eq:Taylor3}
 |p^\eps(\rho)-p(\rho)-p'(\rho)(\rho^\eps-\rho)|\le C|\rho-\rho(\cdot)|^{\gamma}\ast_y\eta^\eps.
 \end{equation} 
Combining~\eqref{eq:Taylor1} and~\eqref{eq:Taylor3} we get
 \begin{equation}\label{eq:pHoldersbound}
 |p^\eps(\rho)-p(\rho^\eps)|\le C|\rho-\rho^\eps|^{\gamma}+ C|\rho-\rho(\cdot)|^{\gamma}\ast_y\eta^\eps.
 \end{equation}
Taking the $L^q$ norm of both sides of  \eqref{eq:pHoldersbound} for the first term on the R.H.S. we see that
\begin{equation}
C\||\rho-\rho^\eps|^{\gamma}\|_{L^q}= C\|\rho-\rho^\eps\|^\gamma_{L^{\gamma q}}.
\end{equation}
Finally,  for the $L^q$ norm of  \eqref{eq:pHoldersbound} for the second term on the R.H.S. by Jensen's inequality and Fubini's theorem we have
\begin{align}
C\||\rho-\rho(\cdot)|^{\gamma}\ast_y\eta^\eps\|_{L^q}&\le C\left(\int \int |\rho(x)-\rho(x-y)|^{\gamma q} \d x  \eta_\eps (y)\d y\right)^{1/q}\\
&=C\left(\int \|\rho(\cdot)-\rho(\cdot-y) \|^{\gamma q}_{L^{\gamma q}}  \eta_\eps (y)\d y\right)^{1/q}\\
&\le C\sup_y |\eta_\eps (y)|^{1/q}\left(\int_{\supp \eta_\eps} \|\rho(\cdot)-\rho(\cdot-y) \|^{\gamma q}_{L^{\gamma q}} \d y\right)^{1/q}\\
&\le C \sup\limits_{y\in\supp\eta_\eps}\norm{\rho(\cdot)-\rho(\cdot-y)}^\gamma_{L^{\gamma q}}.
\end{align}

Finally, we use the definition of the Besov norm and~\eqref{besovshift} to write
\begin{equation}
\begin{aligned}
\norm{p^\eps(\rho)-p(\rho^\eps)}_{L^q}&\leq C\left(\norm{\rho^\eps-\rho}_{L^{\gamma q}}^\gamma + \sup\limits_{s\in\supp\eta^\eps}\norm{\rho(\cdot)-\rho(\cdot-s)}^\gamma_{L^{\gamma q}}  \right)\\
&\leq C\eps^{\gamma\beta}\norm{\rho}^{\gamma}_{B_{\gamma q}^{\beta,\infty}} + \sup\limits_{s\in\supp\eta^\eps}|s|^{\gamma\beta}\norm{\rho}^{\gamma}_{B_{\gamma q}^{\beta,\infty}}\leq C\eps^{\gamma\beta}\norm{\rho}^{\gamma}_{B_q^{\beta,\infty}},
\end{aligned}
\end{equation}
using that $q\gamma < q$, so that $B_q^{\beta,\infty}\subset B_{\gamma q}^{\beta,\infty}$.
\end{proof}

\begin{proof}[Proof of Theorem~\ref{thm:compressibleonsagerCGamma}]
As remarked above the only novelty needed to establish the desired result is to estimate commutator errors due to nonlinearity of the pressure. Precisely, we need to show that the local version of $r_3^\eps$ and $s^\eps$, which we will denote $R_3^\eps$ and $S^\eps$, of equation~\eqref{eq:smoothEnergy} converge to zero as $\eps\to0$. For a test function $\phi\in\DC$ we denote

\begin{equation}\label{eq:errorR}
R^\eps:=\int^T_0\int_{\T^d}\nabla (p(\rho^\eps)-p(\rho)^\eps)\cdot\phi u^\eps\d x\d t,
\end{equation}
and 
\begin{equation}\label{eq:errorS}
S^\eps := \int^T_0\int_{\T^d}\phi \diverg [\rho^\eps u^\eps-(\rho u)^\eps)]\;P'(\rho^\eps)\d x\d t.
\end{equation}
Integrating~\eqref{eq:errorR} by parts and using Lemma~\ref{lemma:Cgamma} we obtain the following  estimate.
\begin{equation}
\begin{aligned}
|R^\eps|&\leq\norm{\phi}_{\Ccal^1}\int_0^T\intT{|p(\rho)^\eps-p(\rho)^\eps|(|\nabla u^\eps|+|u^\eps|)}\dt\\
&\leq C\norm{\phi}_{\Ccal^1}\norm{p(\rho^\eps)-p(\rho)^\eps}_{L^{\frac q2}}(\norm{\nabla u^\eps}_{L^p}+\norm{u^\eps}_{L^p})\\
&\leq C\norm{\rho^\eps-\rho}^\gamma_{L^\frac{\gamma q}{2}}(\norm{\nabla u^\eps}_{L^p}+\norm{u^\eps}_{L^p})\\
&\leq C(\eps^{\gamma\beta+(\alpha-1)}+\eps^{\gamma\beta+\alpha})\norm{\rho}^\gamma_{B_{q}^{\beta,\infty}}\norm{u}_{B_p^{\alpha,\infty}}
\end{aligned}
\end{equation}
We note that $\frac{\gamma q}{2}< q$, so we can embed $B_q^{\beta,\infty}$ into $B_{\frac{\gamma q}2}^{\beta,\infty}$.

\medskip
We now investigate the term $S^\eps$ and see that we can integrate by parts to obtain 
\begin{multline}\label{eq:sepssplit1}
|S^\eps|= \left|\int^T_0\int_{\T^d}\phi \diverg [\rho^\eps u^\eps-(\rho u)^\eps)]\;P'(\rho^\eps)\d x\d t\right|\\\le \int^T_0\int_{\T^d}|\nabla \phi \cdot [\rho^\eps u^\eps-(\rho u)^\eps)]\;P'(\rho^\eps)|\d x\d t+\int^T_0\int_{\T^d}|\phi  [\rho^\eps u^\eps-(\rho u)^\eps)]\cdot  \nabla P'(\rho^\eps)|\d x\d t.
\end{multline}
 We make note of the following pointwise identity \eqref{eq:pointwisedecomp} but with $f$ and $g$ replaced by $\rho$ and $u$ respectively, that is, 
\begin{equation}
\begin{aligned}
&\rho^\eps u^\eps-(\rho u)^\eps= (\rho^\eps-\rho)(u^\eps-u)\\
&\hspace{0.5cm}-\int_{-\eps}^\eps\int_{\T^d}\eta^\eps(\tau,\xi)(\rho(t-\tau,x-\xi)-\rho(t,x))(u(t-\tau,x-\xi)-u(t,x)) d\xi d\tau
\end{aligned}
\end{equation} 
and using~\eqref{eq:pointwisedecomp} allows us to split first term on the R.H.S. of \eqref{eq:sepssplit1} into two terms. Here again we focus on the first of these terms only, as the other one produces the same estimates, after applying Fubini's theorem, as seen in \cite{FGSW}.
We see that
\begin{equation}
\int^T_0\int_{\T^d}|\nabla\phi \cdot  (\rho^\eps-\rho)(u^\eps-u)P'(\rho^\eps)|\d x\d t\le \|\phi\|_{C^1}\eps^\beta\|\rho\|_{B_q^{\beta,\infty}} \eps^\alpha\|u\|_{B_p^{\alpha,\infty}} \|P'(\rho^\eps)\|_{L^\infty}.
\end{equation}

We will now focus on the second term on the R.H.S.  of \eqref{eq:sepssplit1}, namely, 
\begin{equation}
\int^T_0\int_{\T^d}|\phi  [\rho^\eps u^\eps-(\rho u)^\eps)]\cdot  \nabla P'(\rho^\eps)|\d x\d t.
\end{equation}
and by letting $y=(x,t)$ we split $\T^d\times (0,T)$ into two disjoint domains $\mathcal{A}:=\{y:\rho^\eps(y)=0 \}$ and $\mathcal{A}^c$ and see that trivially on $\mathcal{A}$ that $\rho(y)= 0$ a.e.. 
For the integral over $\mathcal{A}$ we note that $\nabla P'(\rho^\eps)$  is a distribution that 
  may have a singular part but we see that $\phi  [\rho^\eps u^\eps-(\rho u)^\eps]$ is smooth and equals zero on $\mathcal{A}$ 
and so any singular part vanishes. Thus we are left with
\begin{equation}
\int_{\mathcal{A}^c}|\phi   [\rho^\eps u^\eps-(\rho u)^\eps]\nabla P'(\rho^\eps)|\d x\d t
\end{equation}
and using again the identity \eqref{eq:pointwisedecomp} we obtain
 \begin{equation}
 \int_{\mathcal{A}^c}|\phi   [(\rho^\eps-\rho)(u^\eps-u)]\nabla P'(\rho^\eps)|\d x\d t.
 \end{equation}

For the integral over $\mathcal{A}^c$ we see that 
\begin{equation}
\int_{\mathcal{A}^c}|\phi  (\rho^\eps-\rho)(u^\eps-u)\nabla P'(\rho^\eps)|\d x\d t=\int_{\mathcal{A}^c}|\phi  (\rho^\eps-\rho)(u^\eps-u) P''(\rho^\eps)\cdot \nabla \rho^\eps |\d x\d t
\end{equation}
and we observe that by the definition of $P$ we have $\rho^\eps P''(\rho^\eps) = p'(\rho^\eps)$, and by assumption $p'$ is bounded. Therefore we have the following bound
%
%
%
	\begin{multline}
	\int_{\mathcal{A}^c}\left|\phi(\rho^\eps-\rho)(u^\eps-u)P''(\rho^\eps)\nabla\rho^\eps\right| \d x\d t\\\leq \int_{\mathcal{A}^c}\left|\phi(\rho^\eps-\rho)(u^\eps-u)p'(\rho^\eps)\frac{\nabla \rho^\eps}{\rho^\eps}\right|\d x \d t.
	\end{multline}
	We have assumed that $p'(0)=0$ and $p'\in C^{0,\gamma-1}$ and so take any $\rho_1,\rho_2$ such that $p'(\rho_2)=0$ and we obtain that
	\begin{equation}
	|p'(\rho_1)|=|p'(\rho_1)-p'(\rho_2 )|\le C|\rho_1 -\rho_2|^{\gamma -1} =C|\rho_1|^{\gamma -1}
	\end{equation}
	using the definition of H\"older continuity.
	Thus letting $\rho_1=\rho^\eps (x)$ for each $x$ we see that $|p'(\rho^\eps )(x)|\le C|\rho^\eps |^{\gamma -1}(x)$
	and so we obtain
	\begin{multline}
\int_{\mathcal{A}^c}\left|\phi(\rho^\eps-\rho)(u^\eps-u)p'(\rho^\eps)\frac{\nabla \rho^\eps}{\rho^\eps}\right|\d x \d t\\\leq C \int_{\mathcal{A}^c}\left|\phi(\rho^\eps-\rho)(u^\eps-u)(\rho^\eps)^{\gamma-1}\frac{\nabla \rho^\eps}{\rho^\eps}\right|\d x \d t.
	\end{multline}
	We will split the integral over $\mathcal{A}^c$ further into different disjoint domains, 
	 $\mathcal{B}_{\eps^\beta}:=\{y:0<\rho^\eps(y)<\eps^\beta \}$ and $\mathcal{C}_{\eps^\beta}:=\{y:\rho^\eps(y)\ge\eps^\beta\}$.
	For the integral over $\mathcal{B}_{\eps^\beta}$ we see that 
	\begin{align}
	\left|\int_{\mathcal{B}_{\eps^\beta}}\phi(\rho^\eps-\rho)\right.&\hspace{-0.14cm}\left.(u^\eps-u)(\rho^\eps)^{\gamma-1}\frac{\nabla \rho^\eps}{\rho^\eps}\d x\d t\right|\\\le & C\|\phi\|_{C^0}\eps^{\beta-1+\alpha}\|\rho\|_{B^{\beta,\infty}_{q}}\|u\|_{B^{\alpha,\infty}_{p}}\|(\rho^\eps)^{\gamma-1}\|_{L^\infty(\mathcal{B}_{\eps^\beta})}\left\|\frac{\rho^\eps-\rho}{\rho^\eps}\right\|_{L^q(\mathcal{B}_{\eps^\beta})}\\\le & C\|\phi\|_{C^0}\eps^{\gamma\beta-1+\alpha}\|\rho\|_{B^{\beta,\infty}_{q}}\|u\|_{B^{\alpha,\infty}_{p}}\left\|\frac{\rho^\eps-\rho}{\rho^\eps}\right\|_{L^q(\mathcal{B}_{\eps^\beta})},
	\end{align}
	where for the last line as $\rho^\eps(y)\le \eps^\beta$ so  $(\rho^\eps(y))^{\gamma-1}\le \eps^{\beta(\gamma-1)}$ as ${\gamma-1}>0$.
	We also have the  assumption that $\left\|\frac{\rho^\eps-\rho}{\rho^\eps}\right\|_{L^q(\mathcal{B}_{\eps^\beta})}\le C$ and so we have the bound $C \eps^{\gamma\beta-1+\alpha}$ as wanted.  
	We are left with the integral over $\mathcal{C}_{\eps^\beta}$ and see that
	\begin{multline}
	\left|\int_{\mathcal{C}_{\eps^\beta}} \phi(\rho^\eps-\rho)(u^\eps-u)(\rho^\eps)^{\gamma-1}\frac{\nabla \rho^\eps}{\rho^\eps}\d x\d t\right|\\\le C\|\phi\|_{C^0}\eps^{\beta-1+\alpha}\|\rho\|_{B^{\beta,\infty}_{q}}\|u\|_{B^{\alpha,\infty}_{p}}\left\|\frac{\rho^\eps-\rho}{(\rho^\eps)^{2-\gamma}}\right\|_{L^q(\mathcal{C}_{\eps^\beta})}.
	\end{multline}
	As $\rho^\eps\ge\eps^\beta $ thus $(\rho^\eps)^{-1}\le \eps^{-\beta}$ and so $(\rho^\eps)^{\gamma-2}\le \eps^{\beta(\gamma-2)}$ we obtain
	\begin{equation}
	\left\|\frac{\rho^\eps-\rho}{(\rho^\eps)^{2-\gamma}}\right\|_{L^q(\mathcal{C}_{\eps^\beta})}\le \left\|\rho^\eps-\rho\right\|_{L^q(\mathcal{C}_{\eps^\beta})}\eps^{\beta(\gamma-2)}\le C \eps^\beta \|\rho\|_{B^{\beta,\infty}_q}\eps^{\beta(\gamma-2)}\le C \eps^{\beta(\gamma-1)}. 
	\end{equation}
	We are thus done as obtain convergence to zero as long as $\gamma\beta+\alpha >1$.
%
%

We have thus shown that, under the assumptions of the theorem, we have $R^\eps, S^\eps\to0$ . The result follows.
\end{proof}


We have written Theorem \ref{thm:compressibleonsagerCGamma} in the most general from but observe that the condition
\begin{equation}
\left\|\frac{\rho^\eps-\rho}{\rho^\eps}\right\|_{L^q(\mathcal{B}_{\eps^\beta})}\le C 
\end{equation}
 feels rather artificial and is not in the $p\in C^2$ result from \cite{FGSW}. We will now focus on finding conditions on $\rho$ for different $L^q$ norms that will control this term. 

Our first result will show that when we assume that $q=1$ and so $u,\rho$ are H\"older continuous, not just Besov functions, then we can control this term directly as expected and not have to ask for any special extra conditions.     

\begin{lemma}\label{mollificationlem}
For a non-negative function $w\in L^1$ then on $\Omega\subset \T^N$ where $|\Omega|\neq 0$ and $w^\eps|_{\Omega}>0$ then $\left\|\frac{w^\eps-w}{w^\eps}\right\|_{L^1(\Omega)}\le C$ where $C$ does not depend on $\eps$ but may depend on $w$ and $\Omega$.
\end{lemma}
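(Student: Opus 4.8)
The plan is to estimate the integrand separately on the region where $w$ lies below its mollification, which is harmless, and on the complementary ``peak'' set, which is controlled by a covering argument at the mollification scale. Since $w^\eps>0$ on $\Omega$ we may write $\frac{w^\eps-w}{w^\eps}=1-\frac{w}{w^\eps}$; on $\{x\in\Omega:0\le w(x)\le w^\eps(x)\}$ the integrand lies in $[0,1]$, contributing at most $|\Omega|$. It therefore remains to prove $\int_F\frac{w}{w^\eps}\le C$ on $F:=\{x\in\Omega:w(x)>w^\eps(x)>0\}$, using $\bigl|1-\tfrac{w}{w^\eps}\bigr|\le\tfrac{w}{w^\eps}$ there.

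Two elementary facts drive the estimate on $F$. First, since $\eta\equiv 1$ on $B(0,\tfrac13)$ and $w\ge0$, one has the pointwise lower bound $w^\eps(x)\ge\eps^{-N}\int_{B(x,\eps/3)}w$. Second --- this is the measure-theoretic observation --- the set $E:=\{x:w(x)>0,\ \int_{B(x,\eps/6)}w=0\}$ has Lebesgue measure zero: otherwise a Lebesgue density point $y_0$ of $E$ satisfies $|E\cap B(y_0,\eps/12)|>0$, and choosing any $z$ in this intersection, the relation $\int_{B(z,\eps/6)}w=0$ together with $B(y_0,\eps/12)\subseteq B(z,\eps/6)$ forces $w=0$ a.e.\ on $B(y_0,\eps/12)$, contradicting that $E\cap B(y_0,\eps/12)$ is a set of positive measure on which $w>0$. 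Removing the null set $E$ from $F$ (which changes neither $|F|$ nor $\int_F\frac{w}{w^\eps}$), we may assume every $x\in F$ satisfies $w(x)>0$ and $m_x:=\int_{B(x,\eps/6)}w>0$.

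Now apply the Besicovitch covering theorem to $F$ with all radii equal to $\eps/6$: there is a countable family $\{x_i\}\subseteq F$ with $F\subseteq\bigcup_i B(x_i,\eps/6)$ and bounded overlap, $\sum_i\mathbbm{1}_{B(x_i,\eps/6)}\le C_N$. Whenever $x\in B(x_i,\eps/6)$ we have $B(x_i,\eps/6)\subseteq B(x,\eps/3)$, hence by the first fact $w^\eps(x)\ge\eps^{-N}m_{x_i}$ on $B(x_i,\eps/6)$ with $m_{x_i}>0$, so that $\int_{F\cap B(x_i,\eps/6)}\frac{w}{w^\eps}\le\eps^{N}m_{x_i}^{-1}\int_{B(x_i,\eps/6)}w=\eps^{N}$. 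Summing over $i$ and using the overlap bound, $\int_F\frac{w}{w^\eps}\le\eps^{N}\#\{i\}$; integrating the overlap inequality gives $c_N(\eps/6)^N\#\{i\}=\sum_i|B(x_i,\eps/6)|\le C_N\bigl|\bigcup_i B(x_i,\eps/6)\bigr|\le C_N|\T^N|$, whence $\#\{i\}\le C\eps^{-N}$ and finally $\int_F\frac{w}{w^\eps}\le C$ with $C$ depending only on $N$. Together with the first paragraph this proves the lemma; we may restrict to $\eps$ small enough that metric balls in $\T^N$ of radius $\eps$ are Euclidean balls, since only $\eps\to0$ is used in the sequel.

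The one genuinely delicate choice is the covering radius $\eps/6$: being exactly half of the radius $\eps/3$ featuring in the lower bound for $w^\eps$, it guarantees $B(x_i,\eps/6)\subseteq B(x,\eps/3)$ for every $x$ in the ball, so that the numerator mass $\int_{B(x_i,\eps/6)}w$ cancels the denominator factor $\eps^{-N}m_{x_i}$ and leaves precisely $\eps^N$ per ball --- which is summable because Besicovitch produces only $O(\eps^{-N})$ balls. The null-set statement of the second paragraph --- the new measure-theoretic observation --- is exactly what legitimizes this cancellation by excluding $m_{x_i}=0$.
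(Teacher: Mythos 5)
Your proof is correct, and at its core it runs on the same engine as the paper's: cover the relevant set by $O(\eps^{-N})$ balls at the mollification scale, and on each ball cancel the local mass of $w$ appearing in the numerator against the lower bound $w^\eps(x)\ge \eps^{-N}\int_{B(x,\eps/3)}w$ coming from $\eta\equiv 1$ on $B(0,\tfrac13)$, so that each ball contributes $O(\eps^{N})$. The implementations differ in a worthwhile way, though. The paper first approximates $w$ from below by simple functions $w_n=\sum_i a_i\chi_{A_i}$, refines this partition against a cover of $\Omega$ by balls of radius $\eps/4$, performs the mass cancellation at the level of the simple functions (where the local mass is automatically positive, since null or zero-coefficient pieces are discarded), and recovers the bound for $w$ by Fatou's lemma. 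You work with $w$ directly, which forces you to confront the degenerate case of points where $w>0$ but the surrounding mass $\int_{B(x,\eps/6)}w$ vanishes; your observation that this set $E$ is Lebesgue-null is precisely the substitute for the paper's discretization, and it lets you dispense with both the simple-function approximation and the limit passage. Your version is arguably tighter: the paper's reassignment of the refined pieces $D_i$ among overlapping balls is its least transparent step, whereas Besicovitch (or, since all radii are equal, simply a maximal $\eps/12$-separated net of $F$) handles the overlap cleanly, and the choice of radius $\eps/6$ makes the inclusion $B(x_i,\eps/6)\subseteq B(x,\eps/3)$ automatic. One cosmetic remark: the initial split of $\Omega$ into $\{w\le w^\eps\}$ and $F$ is not really needed, since the cruder bound $\bigl|1-\tfrac{w}{w^\eps}\bigr|\le 1+\tfrac{w}{w^\eps}$ used in the paper works everywhere and your covering estimate applies verbatim to all of $\{w>0\}\setminus E$.
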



\begin{proof}
	Firstly, we notice that as $|\Omega|\le C$ so 
	\begin{equation}
	\left\|\frac{w^\eps-w}{w^\eps}\right\|_{L^1(\Omega)}\le \|1\|_{L^1(\Omega)}+\left\|\frac{w}{w^\eps}\right\|_{L^1(\Omega)} =C+\left\|\frac{w}{w^\eps}\right\|_{L^1(\Omega)}
	\end{equation}
	and so we just need to show that $\left\|\frac{w}{w^\eps}\right\|_{L^1(\Omega)}\le C$.

For $w\in L^1(\Omega)$ we will perform the standard approximation by simple functions used in measure theory. We take the Borel $\sigma$-algebra $\mathcal{A}$ on $\Omega$ and let $A_i\in \mathcal{A}$ be pairwise-disjoint and $a_i\in \R_+$ and let $w_n=\sum_{i=1}^n a_i \chi_{A_i}$ where $\lim_{n\to \infty}\|w_n-w\|_{L^1(\Omega)}=0$ where $w_n\le w_m$ for all $n<m$. We see that $|A_i|\neq 0$ and $a_i>0$ as if not we could just remove these terms and this would not change the value of $\|w_n-w\|_{L^1(\Omega)}$.

We can cover $\Omega$ in balls $\{B_{\eps/4}(x_j)\}_{j=1}^m$ of radius $\eps/4$ so that $\sum^m_{j=1}|B_{\eps/4}(x_j)| \le 2^N|\Omega|$. We can then create a new collection of open sets by taking the refinement of $A_i\in \mathcal{A}$  with $\{B_{\eps/4}(x_j)\}^m_{j=1}$ and re-label  the new collection $\{D_j\}_{j=1}^\beta$ and so can write $w_n=\sum_{i=1}^n a_i \chi_{A_i}=\sum_{i=1}^k b_i \chi_{D_i}$. We can collect the terms together based on the sets  $\{B_{\eps/4}(x_j)\}^m_{j=1}$ where if the set is in multiple $B_{\eps/4}(x_j)$ we chose it to only appear in one of the sets and so obtain
 $w_n=\sum_{i=1}^k b_i \chi_{D_i}= \sum_{j=1}^m\sum_{i=1}^{k_j} b_i \chi_{D_i}$ and thus
 \begin{equation}
 \left\|\frac{w_n}{w_n^\eps}\right\|_{L^1(\Omega)}=\left\|\frac{\sum_{j=1}^m\sum_{i=1}^{k_j} b_i \chi_{D_i}}{\sum_{i=1}^k b_i \chi_{D_i}^\eps}\right\|_{L^1(\Omega)}\le \sum_{j=1}^m\left\|\frac{\sum_{i=1}^{k_j} b_i \chi_{D_i}}{\sum_{i=1}^k b_i \chi_{D_i}^\eps}\right\|_{L^1(\Omega)} \le
  \sum_{j=1}^m\left\|\frac{\sum_{i=1}^{k_j} b_i \chi_{D_i}}{\sum_{i=1}^{k_j} b_i \chi_{D_i}^\eps}\right\|_{L^1(\Omega)}. 
 \end{equation}
 We know that for every point in the ball $B_{\eps/4}(x_j)$ will be covered by the support of the mollifier of radius $\eps$ and so, ignoring the terms outside $B_{\eps/4}(x_j)$, we see that $\sum_{i=1}^{k_j} b_i \chi_{D_i}^\eps> C_d \frac{\sum_{i=1}^{k_j} b_i |{D_i}|}{|B_\eps|}$ for all $x\in B_{\eps/4}(x_j)$ and so   $\left(\sum_{i=1}^{k_j} b_i \chi_{D_i}^\eps\right)^{-1}< C_d \frac{|B_\eps|}{\sum_{i=1}^{k_j} b_i |{D_i}|}$ thus we can use this bound to obtain  
 \begin{align}\label{eq:L1cancleation}
 \sum_{j=1}^m\left\|\frac{\sum_{i=1}^{k_j} b_i \chi_{D_i}}{\sum_{i=1}^{k_j} b_i \chi_{D_i}^\eps}\right\|_{L^1(\Omega)}\le& \sum_{j=1}^m C_d \frac{|B_\eps|}{\sum_{i=1}^{k_j} b_i |{D_i}|}\left\|\sum_{i=1}^{k_j} b_i \chi_{D_i}\right\|_{L^1(\Omega)}\\=&\sum_{j=1}^m C_d \frac{|B_\eps|}{\sum_{i=1}^{k_j} b_i |{D_i}|}\sum_{i=1}^{k_j} b_i |D_i|
 =\sum_{j=1}^m C_d |B_\eps|\le \sum_{j=1}^m C_d |B_{\eps/4}|\le C_d |\Omega|. 
 \end{align}
 
 We have shown that $\left\|\frac{w_n}{w_n^\eps}\right\|_{L^1(\Omega)}\le C$ for any $n$ and any $\eps >0$. We know as well that as $n$ tends to infinity then $\frac{w_n}{w_n^\eps}\to \frac{w}{w^\eps}$ point-wise up to a subsequence  and so we can apply Fatou's lemma to say that 
 \begin{equation}
 \left\|\frac{w}{w^\eps}\right\|_{L^1(\Omega)}=\left\|\lim_{n\to \infty}\frac{w_n}{w_n^\eps}\right\|_{L^1(\Omega)} =\lim_{n\to \infty}\left\|\frac{w_n}{w_n^\eps}\right\|_{L^1(\Omega)} \le C
 \end{equation}  
 and so we are done.
\end{proof}

As a consequence we obtain the following corollary where by assuming H\"older continuity of $u$ and $\rho$ we obtain a natural extension of \ref{thm:compressibleonsager} to the case where $p\in C^{1,\gamma-1}$.

\begin{corollary}\label{HolderCor}
	Let $\rho$, $u$ be a solution of~\eqref{eq:compressibleEuler} in the sense of distributions. Assume 
	\begin{align}
	u&\in C^\alpha((0,T)\times\T^d),\hspace{0.3cm}\rho,\rho u\in C^\beta((0,T)\times\T^d),\\
	0 &\leq \underline{\rho} \leq \rho \leq \overline{\rho} \ \mbox{a.e. (t,x) in}\; (0,T)\times\Tbb^d, 
	\end{align}
	for some constants $\underline{\vr}$, $\overline{\vr}$ and $0\leq\alpha,\beta\le1$ such that,
	\begin{equation}
	 	\alpha+\gamma\beta>1\qquad \text{and}\qquad
	2\alpha+\beta>1.
	\end{equation}
	Assume further that $p\in C^{1,(\gamma-1)}([\underline{\rho},\overline{\rho}])$, and, in addition 
	\begin{equation}
	p'(0)=0\quad \text{as soon as}\quad \underline{\rho}=0.
	\end{equation}
		Then the energy is locally conserved, i.e.~\eqref{eq:energyidentity} holds
	in the sense of distributions on $(0,T)\times\Tbb^d$.
\end{corollary}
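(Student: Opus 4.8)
The plan is to obtain Corollary~\ref{HolderCor} from Theorem~\ref{thm:compressibleonsagerCGamma} together with the measure-theoretic Lemma~\ref{mollificationlem}. Since $(0,T)\times\T^d$ has finite measure, H\"older continuity embeds into Besov regularity with \emph{arbitrary} integrability exponent: $C^\alpha((0,T)\times\T^d)\hookrightarrow B_p^{\alpha,\infty}((0,T)\times\T^d)$ for every $p\in[1,\infty)$, with $\|w\|_{B_p^{\alpha,\infty}}\le C|\Omega|^{1/p}\|w\|_{C^\alpha}$, and likewise for $C^\beta$. So, choosing for instance $p=q=3$ — which satisfies $\tfrac1p+\tfrac2q\le1$, $\tfrac2p+\tfrac1q\le1$, $p,q\ge2$ — all the regularity and exponent hypotheses of Theorem~\ref{thm:compressibleonsagerCGamma} are in force, and the only thing left to supply is the bound~\eqref{eq:mollifierratiocondition}. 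Inspecting the proof of Theorem~\ref{thm:compressibleonsagerCGamma}, one sees that~\eqref{eq:mollifierratiocondition} is invoked in exactly one place, namely in estimating the contribution of the set $\mathcal{B}_{\eps^\beta}$ to the commutator $S^\eps$; everywhere else that proof (equivalently~\cite{FGSW}) applies unchanged. Hence it is enough to re-run that single estimate in the H\"older setting.

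On $\mathcal{B}_{\eps^\beta}=\{0<\rho^\eps<\eps^\beta,\ \rho\neq0\}$ we must bound $\int_{\mathcal{B}_{\eps^\beta}}\big|\phi\,(\rho^\eps-\rho)(u^\eps-u)(\rho^\eps)^{\gamma-1}\tfrac{\nabla\rho^\eps}{\rho^\eps}\big|\,\dx\dt$, where $p'(0)=0$ and $p'\in C^{0,\gamma-1}$ have already been used to write $\rho^\eps P''(\rho^\eps)=p'(\rho^\eps)$ and $|p'(\rho^\eps)|\le C(\rho^\eps)^{\gamma-1}$. The key point, not available for general Besov data, is that H\"older continuity lets us put \emph{every} factor except $\tfrac{|\rho^\eps-\rho|}{\rho^\eps}$ into $L^\infty$: by standard mollification estimates $\|u^\eps-u\|_{L^\infty}\le C\eps^\alpha$ and $\|\nabla\rho^\eps\|_{L^\infty}\le C\eps^{\beta-1}$, while $\|(\rho^\eps)^{\gamma-1}\|_{L^\infty(\mathcal{B}_{\eps^\beta})}\le\eps^{\beta(\gamma-1)}$ since $\gamma-1>0$. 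What remains is $\big\|\tfrac{\rho^\eps-\rho}{\rho^\eps}\big\|_{L^1(\mathcal{B}_{\eps^\beta})}$, and because $\mathcal{B}_{\eps^\beta}\subset\{\rho^\eps>0\}$ has measure at most $|(0,T)\times\T^d|$, Lemma~\ref{mollificationlem} (applied with $w=\rho$) bounds it by a constant $C(\rho)$ independent of $\eps$. Multiplying the powers of $\eps$ yields a bound of order $\eps^{\alpha+\gamma\beta-1}\to0$. On the complement $\mathcal{C}_{\eps^\beta}=\{\rho^\eps\ge\eps^\beta\}$ one instead uses $(\rho^\eps)^{\gamma-1}/\rho^\eps=(\rho^\eps)^{\gamma-2}\le\eps^{\beta(\gamma-2)}$ together with $|\rho^\eps-\rho|\le C\eps^\beta$, $|u^\eps-u|\le C\eps^\alpha$, $|\nabla\rho^\eps|\le C\eps^{\beta-1}$ and finite measure, again of order $\eps^{\alpha+\gamma\beta-1}\to0$. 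The $\nabla\phi$-part of $S^\eps$ and the pressure commutator $R^\eps$ are handled as in Theorem~\ref{thm:compressibleonsagerCGamma}, the only simplification being that~\eqref{eq:pHoldersbound} now gives $\|p(\rho^\eps)-p(\rho)^\eps\|_{L^\infty}\le C\eps^{\gamma\beta}$ directly; and the non-pressure commutators $r_1^\eps,r_2^\eps$ are exactly those of~\cite{FGSW}. Thus every error term vanishes as $\eps\to0$, and the mollified identity~\eqref{eq:smoothEnergy} passes to the limit to give~\eqref{eq:energyidentity}.

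Two small points warrant care. First, $P'(\rho^\eps)$ must be shown bounded up to the vacuum: from $P(\rho)=\rho\int_1^\rho\tfrac{p(r)}{r^2}\,\di r$ one gets $P'(\rho)=\int_1^\rho\tfrac{p(r)}{r^2}\,\di r+\tfrac{p(\rho)}{\rho}$, in which the two potentially divergent $\tfrac{p(0)}{\rho}$-contributions cancel, while the remainder is integrable near $\rho=0$ precisely because $p'(0)=0$ forces $p(r)-p(0)=O(r^\gamma)$ with $\gamma>1$; hence $P'\in C([0,\overline{\rho}])$ and $\|P'(\rho^\eps)\|_{L^\infty}\le C$. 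Second — and this is the genuinely new ingredient — the set $\mathcal{B}_{\eps^\beta}$ moves with $\eps$, yet the constant furnished by Lemma~\ref{mollificationlem} depends only on $\rho$ (through its monotone simple-function approximation) and on the total measure of the domain, never on the particular subset. This uniformity is exactly what allows the argument to be agnostic to the way $\rho$ approaches zero, and it, rather than any subtle cancellation in the commutators, is where the difficulty of the corollary lies; once Lemma~\ref{mollificationlem} is available, the rest is careful bookkeeping of powers of $\eps$.
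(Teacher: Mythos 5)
Your proposal is correct and follows essentially the same route as the paper: reduce to Theorem~\ref{thm:compressibleonsagerCGamma} via the embedding of H\"older into Besov spaces on a finite-measure domain, and handle the single place where~\eqref{eq:mollifierratiocondition} is used by putting $u^\eps-u$, $\nabla\rho^\eps$ and $(\rho^\eps)^{\gamma-1}$ into $L^\infty$ on $\mathcal{B}_{\eps^\beta}$ and bounding $\left\|\tfrac{\rho^\eps-\rho}{\rho^\eps}\right\|_{L^1(\mathcal{B}_{\eps^\beta})}$ by Lemma~\ref{mollificationlem}. Your additional remarks on the boundedness of $P'(\rho^\eps)$ and on the uniformity of the constant from Lemma~\ref{mollificationlem} over the $\eps$-dependent sets are sound and make explicit two points the paper leaves implicit.
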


\begin{proof}
		For the integral over $\mathcal{B}_{\eps^\beta}$, in the proof of Theorem \ref{thm:compressibleonsagerCGamma}, we see that 
	\begin{align}
	\left|\int_{\mathcal{B}_{\eps^\beta}}\phi(\rho^\eps-\rho)\right.&\hspace{-0.14cm}\left.(u^\eps-u)(\rho^\eps)^{\gamma-1}\frac{\nabla \rho^\eps}{\rho^\eps}\d x\d t\right|\\\le &
	 C\|\phi\|_{C^0}\eps^{\beta-1+\alpha}\|\rho\|_{\mathcal{C}^\beta}\|u\|_{\mathcal{C}^\alpha}\|(\rho^\eps)^{\gamma-1}\|_{L^\infty(\mathcal{B}_{\eps^\beta})}\left\|\frac{\rho^\eps-\rho}{\rho^\eps}\right\|_{L^1(\mathcal{B}_{\eps^\beta})}\\
	 	\le & C\|\phi\|_{C^0}\eps^{\gamma\beta-1+\alpha}\|\rho\|_{\mathcal{C}^\beta}\|u\|_{\mathcal{C}^\alpha}.
	\end{align}
	For the other bounds as we are on a domain with finite measure so we can bound the Besov norms by the H\"older norms. 
\end{proof}

When we still want to consider Besov spaces for $\rho$ and $u$ we have to consider extra conditions on $\rho$ in order to control the term $\left\|\frac{\rho^\eps-\rho}{\rho^\eps}\right\|_{L^q(\mathcal{B}_{\eps^\beta})}$. Our first method will be to ask for an integrability condition on $\frac{1}{\rho}$.

\begin{lemma}\label{lemma:lprecipriocal}
	Assuming that $\frac{1}{w}\in L^p$ and $w \in L^q$ then 
	\begin{equation}
	\left\|\frac{w^\eps -w }{w^\eps}\right\|_{L^r}\le C\qquad \mathrm{for}\qquad \frac{1}{p}+\frac{1}{q}\le \frac{1}{r}  
	\end{equation} 
	and in fact if $r<\infty$
	\begin{equation}
	\lim_{\eps \to 0}\left\|\frac{w^\eps -w }{w^\eps}\right\|_{L^r} =0\qquad \mathrm{for}\qquad \frac{1}{p}+\frac{1}{q}\le \frac{1}{r}.
	\end{equation} 
\end{lemma}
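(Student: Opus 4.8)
The plan is to trade the factor $1/w^\eps$ for the mollification $(1/w)^\eps$ via Jensen's inequality; once this is done, both assertions follow from H\"older's inequality, Young's convolution inequality, and the continuity of mollification on $L^q$. Note first that $1/w\in L^p$ forces $w>0$ a.e., so that $w^\eps>0$ everywhere and $(1/w)^\eps<\infty$ a.e.; I will write $\Omega$ for the (finite-measure) underlying domain.

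\emph{Step 1: the pointwise reciprocal estimate.} Since $\eta^\eps\ge 0$ has unit integral, $w^\eps(x)$ is the average of $w$ against the probability density $\eta^\eps(x-\cdot)$, and convexity of $t\mapsto 1/t$ on $(0,\infty)$ together with Jensen's inequality gives
\[
\frac{1}{w^\eps(x)}=\frac{1}{(\eta^\eps*w)(x)}\le\bigl(\eta^\eps*(1/w)\bigr)(x)=(1/w)^\eps(x)\qquad\text{for a.e. }x.
\]
Consequently $\left|\dfrac{w^\eps-w}{w^\eps}\right|\le|w^\eps-w|\,(1/w)^\eps$ almost everywhere.

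\emph{Step 2: the uniform bound.} Let $r_0$ be defined by $\tfrac1{r_0}=\tfrac1p+\tfrac1q$, so that the hypothesis $\tfrac1p+\tfrac1q\le\tfrac1r$ is exactly $r\le r_0$. H\"older's inequality with exponents $q$ and $p$, followed by Young's convolution inequality, gives
\[
\bigl\|\,|w^\eps-w|\,(1/w)^\eps\,\bigr\|_{L^{r_0}}\le\|w^\eps-w\|_{L^q}\,\|(1/w)^\eps\|_{L^p}\le 2\|w\|_{L^q}\,\|1/w\|_{L^p}.
\]
Since $\Omega$ has finite measure and $r\le r_0$, we conclude
\[
\left\|\frac{w^\eps-w}{w^\eps}\right\|_{L^r}\le|\Omega|^{\frac1r-\frac1{r_0}}\,2\|w\|_{L^q}\,\|1/w\|_{L^p}=:C,
\]
which does not depend on $\eps$ (only on $w$ and $\Omega$).

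\emph{Step 3: vanishing as $\eps\to0$ for $r<\infty$.} If $q<\infty$, the very same chain of inequalities yields
\[
\left\|\frac{w^\eps-w}{w^\eps}\right\|_{L^r}\le|\Omega|^{\frac1r-\frac1{r_0}}\,\|w^\eps-w\|_{L^q}\,\|1/w\|_{L^p}\longrightarrow 0,
\]
because mollification converges in $L^q$ for $q<\infty$. The only remaining case is $q=\infty$, in which the hypothesis forces $p\ge r$; here I would use the pointwise bound of Step 1 in the form $\bigl|\tfrac{w^\eps-w}{w^\eps}\bigr|\le 2\|w\|_{L^\infty}\,(1/w)^\eps$, observe that $(1/w)^\eps\to 1/w$ in $L^r$ (using $r\le p$ and $|\Omega|<\infty$), and that the Lebesgue differentiation theorem gives $w^\eps\to w$ a.e., whence $\tfrac{w^\eps-w}{w^\eps}=1-\tfrac{w}{w^\eps}\to0$ a.e.; the generalized dominated convergence theorem then gives convergence to $0$ in $L^r$. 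The single genuinely new ingredient is the Jensen step $1/w^\eps\le(1/w)^\eps$ in Step~1; everything else is standard, the only technical nuisance being the endpoint exponent $q=\infty$ treated at the end of Step~3.
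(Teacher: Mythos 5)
Your proof is correct and follows essentially the same route as the paper: the pointwise Jensen bound $1/w^\eps\le(1/w)^\eps$, then H\"older and Young's convolution inequality, with the finite measure of the domain absorbing the gap between $r$ and $1/(\tfrac1p+\tfrac1q)$. Your Step 3 is in fact slightly more complete than the paper's one-line remark, which only addresses the case $q<\infty$ and leaves the endpoint $q=\infty$ (still allowed when $r<\infty$) untreated.
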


\begin{proof}
	Using H\"older's inequality and then Jensen's inequality, as the integral of the mollifier is one and $1/x$ is a convex function we get that $\norm{\frac{1}{w^\eps}}\leq\norm{(\frac1w)^\eps}\leq\norm{\frac{1}{w}}$ and so 
	\begin{equation}
	\left\|\frac{w^\eps -w }{w^\eps}\right\|_{L^r}\le  \left\|w^\eps -w \right\|_{L^q}\left\|\frac{1}{w^\eps}\right\|_{L^p}\le  \left\|w^\eps -w \right\|_{L^q}\left\|\frac{1}{w}\right\|_{L^p}  \le C.
	\end{equation}
	As long as $q<\infty$ we see that this, in fact, converges to zero. 
\end{proof}

We now obtain the following corollary adding this condition into Theorem \ref{thm:compressibleonsagerCGamma}. We note that when $p=q=3$ then we obtain the best result with the weakest integrability assumption in the Besov norms.

\begin{corollary}\label{fastdescent}
		Let $\rho$, $u$ be a solution of~\eqref{eq:compressibleEuler} in the sense of distributions. Assume 
		\begin{align}
		u&\in B_p^{\alpha,\infty}((0,T)\times\T^d),\hspace{0.3cm}\rho,\rho u\in B_q^{\beta,\infty}((0,T)\times\T^d),\\
		0 &\leq \underline{\rho} \leq \rho \leq \overline{\rho} \ \mbox{a.e. (t,x) in}\; (0,T)\times\Tbb^d, 
		\end{align}
		for some constants $\underline{\vr}$, $\overline{\vr}$ and $0\leq\alpha,\beta\le1$ such that,
		\begin{equation}
		\frac1p + \frac2q \le 1,\qquad \frac2p + \frac1q \le 1,\qquad p,q\ge 2,  \qquad 	\alpha+\gamma\beta>1\qquad \text{and}\qquad
		2\alpha+\beta>1.
		\end{equation}
		Define  $\mathcal{E}:=\{x: \rho\neq0\}$ and assume that 
		\begin{equation}
		\frac{1}{\rho}\in {L^q(\mathcal{E})}.
		\end{equation}
		Assume further that $p\in C^{1,(\gamma-1)}([\underline{\rho},\overline{\rho}])$, and, in addition 
		\begin{equation}
		p'(0)=0\quad \text{as soon as}\quad \underline{\rho}=0.
		\end{equation}
			Then the energy is locally conserved, i.e.~\eqref{eq:energyidentity} holds
		in the sense of distributions on $(0,T)\times\Tbb^d$.
\end{corollary}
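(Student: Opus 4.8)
The plan is to reduce everything to Theorem~\ref{thm:compressibleonsagerCGamma}: since all hypotheses of that theorem are already assumed here, the only thing to verify is the mollifier-ratio condition~\eqref{eq:mollifierratiocondition}, namely that
\[
\left\|\frac{\rho^\eps-\rho}{\rho^\eps}\right\|_{L^q(\mathcal{B}_{\eps^\beta})}\le C(\rho)
\]
with $C$ independent of $\eps$. First I would observe that, since $\rho\ge 0$ and by the definition of $\mathcal{B}_{\eps^\beta}$ we have $\rho^\eps>0$ and $\rho\ne 0$ on this set, the set $\mathcal{B}_{\eps^\beta}$ is contained in $\mathcal{E}=\{\rho\ne 0\}$ up to a null set. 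Hence we may apply Lemma~\ref{lemma:lprecipriocal} with $w=\rho$ on the domain $\mathcal{E}$: the assumption $\frac1\rho\in L^q(\mathcal{E})$ together with $\rho\in L^\infty\subset L^\infty(\mathcal{E})$ (so in particular $\rho\in L^{s}$ for every $s$, in particular $L^q$) gives, taking $p=\infty$ and $r=q$ in that lemma (for which $\frac1\infty+\frac1q\le\frac1q$ holds),
\[
\left\|\frac{\rho^\eps-\rho}{\rho^\eps}\right\|_{L^q(\mathcal{E})}\le C(\rho),
\]
uniformly in $\eps$. Restricting the integration domain from $\mathcal{E}$ to $\mathcal{B}_{\eps^\beta}\subset\mathcal{E}$ only decreases the norm, so~\eqref{eq:mollifierratiocondition} holds with the same constant.

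There is one technical subtlety to dispatch carefully: Lemma~\ref{lemma:lprecipriocal} is stated for functions and mollification on all of $\T^N$, whereas here $\rho$ lives on $(0,T)\times\T^d$ and is mollified by $\eta^\eps$ in all variables, so boundary effects in the time variable could in principle matter near $t=0$ and $t=T$. The clean way around this is to note that $R^\eps$ and $S^\eps$ in the proof of Theorem~\ref{thm:compressibleonsagerCGamma} are tested against $\phi\in\DC$, which is compactly supported in $(0,T)$; thus only the behaviour of $\rho$ on a compact subinterval $[\delta,T-\delta]\times\T^d$ enters, and for $\eps$ small the mollification there coincides with the honest convolution on the torus $\T^{1+d}$ (after a harmless extension), so Lemma~\ref{lemma:lprecipriocal} applies verbatim. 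Alternatively one repeats the elementary Hölder/Jensen argument of Lemma~\ref{lemma:lprecipriocal} directly on $\mathcal{B}_{\eps^\beta}$ using $\|\frac{1}{\rho^\eps}\|_{L^\infty}\le\|(\tfrac1\rho)^\eps\|_{L^\infty}\le\|\tfrac1\rho\|_{L^\infty}$—but since $\tfrac1\rho$ need not be in $L^\infty$, the correct pairing is $\|\tfrac{\rho^\eps-\rho}{\rho^\eps}\|_{L^q}\le\|\rho^\eps-\rho\|_{L^\infty}\|\tfrac{1}{\rho^\eps}\|_{L^q}$ and Jensen gives $\|\tfrac{1}{\rho^\eps}\|_{L^q}\le\|(\tfrac1\rho)^\eps\|_{L^q}\le\|\tfrac1\rho\|_{L^q(\mathcal{E})}$, again uniform in $\eps$.

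I do not expect any genuine obstacle here; the corollary is essentially a packaging statement. The only place requiring mild care is the interplay between the truncation set $\mathcal{B}_{\eps^\beta}$ (which shrinks with $\eps$) and the fixed set $\mathcal{E}$: one must make sure the constant extracted from Lemma~\ref{lemma:lprecipriocal} is the \emph{global} one on $\mathcal{E}$ and hence does not degenerate as $\mathcal{B}_{\eps^\beta}$ moves, which is exactly why we estimate on all of $\mathcal{E}$ first and restrict afterwards. Granting~\eqref{eq:mollifierratiocondition}, the conclusion is immediate from Theorem~\ref{thm:compressibleonsagerCGamma}. The remark that $p=q=3$ is optimal is simply the observation that the constraints $\frac1p+\frac2q\le1$, $\frac2p+\frac1q\le1$, $p,q\ge2$ cut out a region of the $(\tfrac1p,\tfrac1q)$-square whose ``most integrable'' corner (smallest exponents) is $p=q=3$.
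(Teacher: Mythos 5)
Your proposal is correct and takes essentially the same route as the paper: the paper likewise reduces the corollary to verifying the mollifier-ratio condition~\eqref{eq:mollifierratiocondition} on $\mathcal{B}_{\eps^\beta}\subset\mathcal{E}$ and invokes Lemma~\ref{lemma:lprecipriocal} with the pairing $\|\rho^\eps-\rho\|_{L^\infty}\,\|1/\rho^\eps\|_{L^q}$ plus Jensen to obtain the uniform bound $C\|1/\rho\|_{L^q(\mathcal{E})}$. Your initial identification of the lemma's exponents (``taking $p=\infty$'', which would place $1/\rho$ in $L^\infty$) is a labelling slip, but you correct it yourself with the right H\"older pairing, so the argument matches the paper's.
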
  

\begin{proof}
	For the integral over $\mathcal{B}_{\eps^\beta}$, in the proof of Theorem \ref{thm:compressibleonsagerCGamma}, we see that as $\rho\in L^\infty$ and $\eps^\beta \ge \rho^\eps$ then
	\begin{align}
	\left|\int_0^T\int_{\mathcal{B}_{\eps^\beta}}\phi(\rho^\eps-\rho)\right.&\hspace{-0.14cm}\left.(u^\eps-u)(\rho^\eps)^{\gamma-1}\frac{\nabla \rho^\eps}{\rho^\eps}\d x\d t\right|\\\le &
	C\|\phi\|_{C^0}\eps^{\beta-1+\alpha}\|\rho\|_{B_q^{\beta,\infty}}\|u\|_{B_p^{\alpha,\infty}}\|(\rho^\eps)^{\gamma-1}\|_{L^\infty(\mathcal{B}_{\eps^\beta})}\left\|\frac{\rho^\eps-\rho}{\rho^\eps}\right\|_{L^q(\mathcal{B}_{\eps^\beta})}\\
	\le & C\|\phi\|_{C^0}\eps^{\gamma\beta-1+\alpha}\|\rho\|_{B_q^{\beta,\infty}}\|u\|_{B_p^{\alpha,\infty}}\left\|\frac{1}{\rho}\right\|_{L^q(\mathcal{E})}
	\end{align}
	and so we are done using lemma \ref{lemma:lprecipriocal} for the final step.
\end{proof}

\begin{remark}
	Even though we have written $\frac{1}{\rho}\in {L^q(\mathcal{E})}$ we   can fix some $\delta>0$ and only need this condition on some $\mathcal{B}_{\delta}$, as for $\eps^1>\eps^2$, then $\mathcal{B}_{\eps^2}\subset \mathcal{B}_{\eps^1}$ and so when $\eps^\beta <\delta$, then $\mathcal{B}_{\eps^\beta}\subset \mathcal{B}_{\delta}$.    
\end{remark}


One can see  that the condition 
$\frac{1}{\rho}\in {L^q(\mathcal{B}_\delta)}$ is quite a strong assumption and requires a quick approach of the function to the null set. Above we used conventional bounds to obtain a general integral result but do not consider the local structure of the function.
We notice that a point-wise estimate $\rho \le C \rho^\eps$ would allow to control the $L^q$ norm of $\frac{\rho^\eps-\rho}{\rho^\eps}$ and  though convexity of $\rho$ would do we will now show a nice link between this and quasi-nearly subharmonic functions which is defined in \cite{QNS}. 


%
 
 
 \begin{definition}
 	Let $X\subset\mathbb{R}^d$ be a set and  $u:X\to [0,+\infty)$ be Borel
 	measurable. Then $u$ is quasi-nearly subharmonic on $X$, $u\in QNS(X)$, if there is a constant
 	$\eps_0= \eps_0(u),$ 
 	$0<\eps_0<1$, such that for
 	each open set $O\subset X$, $O\neq X$, for each $x\in O$ and each $r$, $0<r\leq \eps_0\delta^{O}(x)$,
 	one has $u\in L^1(B_r(x))$ and
 	\begin{equation}\label{quasinearlysub}
 	u(x)\leq\frac{C}{|B_r(x)|}\int_{B_r(x)}u(y)\d y\quad\text{for some constant}\quad C\geq 1,
 	\end{equation}
 	where $C$ is independent of $r$, $|B_r(x)|=\omega_dr^d$ is the volume of the ball and 
 	\begin{equation}
 	\delta^{O}(x)=\dist(x, O^c)\quad\text{for the complement } O^c \text{ of } O \text{ in } X. 
 	\end{equation}
 \end{definition}

 \begin{lemma}\label{subharmonic}
 	Let $u:X\to[0, +\infty)$ be a Borel measurable function. Then $u$ is quasi-nearly subharmonic if and only if for every $O\subset\subset X$ there exist $M, \eps_0$ such that for any $0<\eps<\eps_0$
 	\begin{equation}
 	u(x)\leq M u^{\eps}(x)\quad\text{for any}\quad x\in O.
 	\end{equation}
 \end{lemma}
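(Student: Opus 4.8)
The plan is to prove both implications of Lemma~\ref{subharmonic} directly from the respective definitions, exploiting the fact that the mollifier $\eta^\eps$ is radial, supported in $B_\eps(0)$, and identically equal to $1$ on $B_{\eps/3}(0)$.

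\textbf{Necessity ($u\in QNS\Rightarrow$ pointwise mollifier bound).} Fix $O\subset\subset X$ and pick an intermediate open set $O'$ with $O\subset\subset O'\subset\subset X$, so that $\dist(O,\partial O')=:2d_0>0$. For $x\in O$ and $r\le d_0$ the ball $B_r(x)$ lies in $O'$ and satisfies $r\le \eps_0\delta^{O'}(x)$ provided we also shrink $r$ below $\eps_0 d_0$; hence the QNS inequality \eqref{quasinearlysub} gives $u(x)\le \tfrac{C}{|B_r(x)|}\int_{B_r(x)}u$. The point is then to compare this average with $u^\eps(x)=\int \eta^\eps(y)u(x-y)\dd y$. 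Since $\eta^\eps$ is radial, decreasing along rays is not guaranteed, but what we do have is $\eta^\eps\ge c_d\eps^{-d}\ONE_{B_{\eps/3}(x)}$ pointwise for a dimensional constant $c_d>0$ (because $\eta\equiv 1$ on $B_{1/3}$), so
\begin{equation}
u^\eps(x)\ \ge\ c_d\eps^{-d}\int_{B_{\eps/3}(x)}u(y)\dd y\ \ge\ c_d'\,\frac{1}{|B_{\eps/3}(x)|}\int_{B_{\eps/3}(x)}u(y)\dd y.
\end{equation}
Choosing $r=\eps/3$ and requiring $\eps<\eps_0':=3\min\{\eps_0d_0,d_0\}$, the QNS bound applies on $B_{\eps/3}(x)$ and yields $u(x)\le C\cdot (c_d')^{-1}u^\eps(x)=:Mu^\eps(x)$ for all $x\in O$, as desired.

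\textbf{Sufficiency (pointwise mollifier bound $\Rightarrow u\in QNS$).} Conversely, assume that for every $O\subset\subset X$ there are $M,\eps_0$ with $u\le Mu^\eps$ on $O$ for $0<\eps<\eps_0$. We must recover \eqref{quasinearlysub}: given an open $O\subsetneq X$, $x\in O$, and $0<r\le\eps_0(x)\delta^O(x)$ (for a suitable $\eps_0$ to be fixed), we want $u(x)\le \tfrac{C}{|B_r(x)|}\int_{B_r(x)}u$. The idea is to pick a slightly smaller neighbourhood of $x$ on which the hypothesis applies and then choose the mollification scale $\eps$ comparable to $r$: with $\eps:=r$ (legitimate once $r<\eps_0$ and $B_r(x)\subset O$, which holds because $r\le\delta^O(x)$), we get
\begin{equation}
u(x)\ \le\ M u^\eps(x)\ =\ M\int_{B_\eps(x)}\eta^\eps(x-y)u(y)\dd y\ \le\ M\,\|\eta\|_{L^\infty}\,\eps^{-d}\int_{B_\eps(x)}u(y)\dd y\ =\ \frac{C}{|B_r(x)|}\int_{B_r(x)}u(y)\dd y,
\end{equation}
with $C=M\|\eta\|_{L^\infty}\omega_d$. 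Integrability $u\in L^1(B_r(x))$ is automatic since $u^\eps(x)<\infty$ forces the integral to be finite. One small bookkeeping point: to invoke the hypothesis ``$u\le Mu^\eps$ on $O$'' we need $O\subset\subset X$; for a general open $O\subsetneq X$ one restricts to the compactly contained exhaustion and notes that the QNS inequality is local in $x$, so the constant $C$ may be taken uniform on compact subsets, which is exactly what the definition requires (the definition quantifies over open $O\neq X$ with $C$ allowed to depend on $u$ but not on $r$; taking $C$ as above works).

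\textbf{Main obstacle.} The only genuinely delicate point is matching the two different ``radii'': the QNS definition uses solid averages over balls $B_r(x)$ while the hypothesis uses the mollified value $u^\eps(x)$, which is a weighted average over $B_\eps(x)$. Bridging them requires the two-sided comparison $c_d\eps^{-d}\ONE_{B_{\eps/3}(x)}\le \eta^\eps(x-\cdot)\le \|\eta\|_\infty\eps^{-d}\ONE_{B_\eps(x)}$, i.e.\ exploiting that $\eta$ is bounded above and bounded below on $B_{1/3}$; this is where the specific normalization of $\eta$ chosen in Section~\ref{besov} is used. Everything else is a routine matter of choosing the admissible scale $\eps\sim r$ and keeping track of which compactly contained subset we work in; no estimate is quantitatively tight, so generous constants suffice throughout.
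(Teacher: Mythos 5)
Your proposal is correct and follows essentially the same route as the paper's own proof: both directions hinge on the two-sided comparison of the mollifier with indicators of balls, namely $\eta^\eps(x-\cdot)\ge c_d\eps^{-d}$ on $B_{\eps/3}(x)$ (since $\eta\equiv1$ on $B_{1/3}$) for the necessity, and $\eta^\eps(x-\cdot)\le\|\eta\|_\infty\eps^{-d}\ONE_{B_\eps(x)}$ for the sufficiency. The extra bookkeeping you include (the intermediate set $O'$ and the explicit choice of admissible scales) only makes explicit what the paper leaves implicit in its phrase ``for sufficiently small $\eps$.''
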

 \begin{proof}
 	Let $u:X\to[0, +\infty)$ be a quasi-nearly subharmonic function. Then for any $\eps<\dist(O, \partial X)$, $u^{\eps}$ is a well-defined smooth function on $O$. 
 	Suppose that $O\subset\subset X$ is a precompact set. Then 
 	$\delta_0=\dist(O, \partial X)$ is a positive number and for $\eps<\delta_0$,
 	\begin{equation}
 	O\subset\left\{x:\;\; \dist(x, \partial X)>\eps\right\}
 	\end{equation}
 	and $u^\eps$ is well-defined on $O$. 
 	We prove that there exist $M$ and $\eps_0$ such that 
 	\begin{equation}
 	u(x)\leq M u^{\eps}(x)\quad \text{for any}\quad x\in O,\quad 0<\eps<\eps_0.
 	\end{equation}
 	Indeed, we have 
 	\begin{equation}
 	u^{\eps}(x)=\frac{1}{\eps^d}\int_{X} \eta\left(\frac{x-y}{\eps}\right)u(y)\,\d y.
 	\end{equation}
 	Note that $y\in X$ for $x\in O$ and $|x-y|<\eps$.
 	Since $u\geq 0$ and recalling that from definition of $\eta$, we know that $\eta =1$ for $|x|<1/3$, we have
 	\begin{equation}
 	\begin{split}
 	u^\eps(x)&\geq\frac{1}{\eps^d}\int_{\left\{\left|\frac{x-y}{\eps}\right|\leq\frac{1}{3}\right\}} \eta\left(\frac{x-y}{\eps}\right)u(y)\,\d y\\
 	&=\frac{1}{\eps^d}\int_{\left\{\left|\frac{x-y}{\eps}\right|\leq\frac{1}{3}\right\}} u(y)\,\d y\\
 	&
 	=\frac{\omega_d}{3^d\left|B_{\frac{\eps}{3}}\left(x\right)\right|}\int_{B_{\frac{\eps}{3}}\left(x\right)} u(y)\,\d y\\
 	&
 	\geq\frac{\omega_d u(x)}{3^dC}
 	\end{split}
 	\end{equation}
 	for sufficiently small $\eps$.
 	Therefore, we obtain 
 	\begin{equation}
 	u(x)\leq\frac{3^dCu^{\eps}(x)}{\omega_d} \quad \text{for sufficiently small}\quad \eps\leq\eps_0\delta^{O}(x).
 	\end{equation}
 	On the other hand, if $u(x)\leq M u^{\eps}(x)$, then we have
 	\begin{equation}\label{eq:mollifierupperbound}
 	\begin{split}
 	u(x)&\leq \frac{M}{\eps^d}\int_{X} \eta\left(\frac{x-y}{\eps}\right)u(y)\,\d y\\
 	&
 	=\frac{M\omega_d}{\omega_d\eps^d}\int_{|x-y|\leq\eps} \eta\left(\frac{x-y}{\eps}\right)u(y)\,\d y\\
 	&\leq\frac{M\omega_d}{|B_\eps(x)|}\int_{B_{\eps}(x)}u(y)\,\d y.
 	\end{split}
 	\end{equation}  
 	Hence we deduce 
 	\begin{equation}
 	u(x)\leq\frac{C}{|B_\eps(x)|}\int_{B_{\eps}(x)}u(y)\,\d y.
 	\end{equation}
 	This completes the proof of the lemma.
 \end{proof}
 
 From this point-wise control showing that $\rho(x)\le M \rho^\eps(x)$ we obtain another corollary to our main result.
 
 \begin{corollary}\label{slowdescent}
 	Let $\rho$, $u$ be a solution of~\eqref{eq:compressibleEuler} in the sense of distributions. Assume 
 	\begin{align}
 	u&\in B_p^{\alpha,\infty}((0,T)\times\T^d),\hspace{0.3cm}\rho,\rho u\in B_q^{\beta,\infty}((0,T)\times\T^d),\\
 	0 &\leq \underline{\rho} \leq \rho \leq \overline{\rho} \ \mbox{a.e. in }\; (0,T)\times\Tbb^d, 
 	\end{align}
 	for some constants $\underline{\vr}$, $\overline{\vr}$ and $0\leq\alpha,\beta\le1$ such that,
 	\begin{equation}
 	\frac1p + \frac2q \le 1,\qquad \frac2p + \frac1q \le 1,\qquad p,q\ge 2,  \qquad 	\alpha+\gamma\beta>1\qquad \text{and}\qquad
 	2\alpha+\beta>1.
 	\end{equation}
 	Assume that $\rho\in QNS(\mathcal{B}_\delta)$ for some $\delta>0$ and $p\in\Ccal^{1,(\gamma-1)}([\underline{\rho},\overline{\rho}])$ with 
 	\begin{equation}
 	p'(0)=0\quad \text{as soon as}\quad \underline{\rho}=0.
 	\end{equation}
 	Then the energy is locally conserved, i.e.~\eqref{eq:energyidentity} holds
 	in the sense of distributions on $(0,T)\times\Tbb^d$.
 \end{corollary}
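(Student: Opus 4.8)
The plan is to derive Corollary~\ref{slowdescent} directly from Theorem~\ref{thm:compressibleonsagerCGamma}. Every hypothesis of that theorem except the mollifier-ratio bound~\eqref{eq:mollifierratiocondition} is assumed here verbatim, so the whole task reduces to verifying that
\begin{equation*}
\left\|\frac{\rho^\eps-\rho}{\rho^\eps}\right\|_{L^q(\mathcal{B}_{\eps^\beta})}\le C(\rho)
\end{equation*}
for a constant $C(\rho)$ independent of $\eps$. First I would restrict to $\eps$ so small that $\eps^\beta<\delta$; then, exactly as in the remark following Corollary~\ref{fastdescent}, the nesting $\mathcal{B}_{\eps^\beta}\subseteq\mathcal{B}_\delta$ holds, and in particular $|\mathcal{B}_{\eps^\beta}|\le|\T^d|<\infty$.

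Next I would invoke Lemma~\ref{subharmonic}: since $\rho\in QNS(\mathcal{B}_\delta)$, that lemma provides, on suitable subsets of $\mathcal{B}_\delta$, constants $M\ge 1$ and $\eps_0>0$ depending only on $\rho$ such that $\rho(x)\le M\rho^\eps(x)$ for all $0<\eps<\eps_0$. Because $\rho^\eps>0$ and $\rho\ge 0$ on $\mathcal{B}_{\eps^\beta}$ by definition of that set, this yields the pointwise estimate
\begin{equation*}
\left|\frac{\rho^\eps-\rho}{\rho^\eps}\right|=\left|1-\frac{\rho}{\rho^\eps}\right|\le\max\{1,\,M-1\}\le M
\end{equation*}
on $\mathcal{B}_{\eps^\beta}$; integrating over $\mathcal{B}_{\eps^\beta}$ and using the measure bound of the previous step gives $\|(\rho^\eps-\rho)/\rho^\eps\|_{L^q(\mathcal{B}_{\eps^\beta})}\le M|\T^d|^{1/q}=:C(\rho)$, which is precisely~\eqref{eq:mollifierratiocondition}. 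Theorem~\ref{thm:compressibleonsagerCGamma} then delivers the local energy equality. Equivalently, one may simply substitute $\rho\le M\rho^\eps$ into the estimate for the integral over $\mathcal{B}_{\eps^\beta}$ carried out in the proof of that theorem, obtaining the same decaying power of $\eps$.

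The one delicate point, which I expect to be the main obstacle, is the mismatch between the conclusion of Lemma~\ref{subharmonic} — valid on open sets compactly contained in $\mathcal{B}_\delta$ — and the fact that the domains $\mathcal{B}_{\eps^\beta}$ depend on $\eps$ and may reach toward $\partial\mathcal{B}_\delta$; one must secure a single pair $(M,\eps_0)$ that works on all of them simultaneously. The underlying mechanism is that the constant in the quasi-near subharmonicity inequality~\eqref{quasinearlysub} is independent of the radius $r$, so evaluating that inequality at scale $r\simeq\eps$ yields exactly $\rho(x)\le M\rho^\eps(x)$ with $\eps$-independent constants; the care required is in confining the family $\{\mathcal{B}_{\eps^\beta}\}_{\eps}$ to a fixed open subset of $\mathcal{B}_\delta$ on which this applies, which can be arranged using the nesting from the first step together with the fact that quasi-near subharmonicity passes to open subsets (so $\rho\in QNS(\mathcal{B}_{\delta'})$ for any $\delta'\le\delta$). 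Once this uniform pointwise bound is established, the remaining estimates are word-for-word those in the proof of Theorem~\ref{thm:compressibleonsagerCGamma}.
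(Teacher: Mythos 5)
Your proposal is correct and follows essentially the same route as the paper: both reduce the corollary to verifying the mollifier-ratio condition~\eqref{eq:mollifierratiocondition} on $\mathcal{B}_{\eps^\beta}\subseteq\mathcal{B}_\delta$, obtain the pointwise bound $\rho\le M\rho^\eps$ from Lemma~\ref{subharmonic}, and conclude via the finite measure of the domain. Your additional remarks on securing a uniform pair $(M,\eps_0)$ across the $\eps$-dependent sets are a more careful treatment of a point the paper's one-line proof passes over silently, but the argument is the same.
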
  
 
 \begin{proof}
 	For the integral over $\mathcal{B}_{\eps^\beta}$, in the proof of Theorem \ref{thm:compressibleonsagerCGamma}, we see that 
 	\begin{equation}
 	\left\|\frac{\rho^\eps-\rho}{\rho^\eps}\right\|_{L^q(\mathcal{B}_{\eps^\beta})}\le \left\|\frac{\rho^\eps+ C\rho^\eps}{\rho^\eps}\right\|_{L^q(\mathcal{B}_{\eps^\beta})}\le C 
 	\quad\text{for}\quad \eps^\beta<\delta
 	\end{equation}
 	and so we are done.
 \end{proof}

\begin{remark}
	\begin{enumerate}
		\item We notice again that we only need that $\rho\in QNS(\mathcal{B}_\delta)$ so only in a neighbourhood of $\mathcal{B}_0:= \lim_{\eps \to 0} \mathcal{B}_{\eps^\beta}$.
		 \item  This condition already deals with the $\rho,\rho_\eps=0$ without splitting into cases and so using this condition the proof is simplified.
		 \item We note that this condition is weaker than local convexity of $\rho$ on $\mathcal{B}_{\delta}$ which would also give the same result. 
	\end{enumerate}
\end{remark}


\subsection{Counterexample for the $L^p$ case}

We indicate in this subsection why Lemma~\ref{mollificationlem} is no longer true when the $L^1$-norm is replaced with the $L^p$-norm for a $p>1$. This shows that the H\"older assumption of Corollary~\ref{HolderCor} cannot easily be relaxed.

  We can see $\rho^\eps(x)$ is like a weighted average of $\rho$ over the ball $B_\eps(x)$ and so heuristically we can see that
\begin{equation}
\frac{ \rho-\rho^\eps}{ \rho^\eps}\simeq\frac{\rho(x)-\frac{1}{|B_\eps|}\int_{B_\eps(x)}\rho(y)\d y}{\frac{1}{|B_\eps|}\int_{B_\eps(x)}\rho(y)\d y}
\end{equation}
(which is rigorous for $\eta_\eps = \frac{1}{|B_\eps|}\chi_{B_\eps(0)}(x)$) and assuming the right hand side is bounded and rearranging gives the condition \eqref{quasinearlysub}. We see that looking at a condition of the form 
\begin{equation}
\left\|\frac{\rho(\cdot)-\frac{1}{|B_\eps|}\int_{B_\eps(\cdot)}\rho(y)\d y}{\frac{1}{|B_\eps|}\int_{B_\eps(\cdot)}\rho(y)\d y}\right\|_{L^p}<C,
\end{equation}
in a sense a ``relatively weighted $L^p$ mean oscillation condition'', could potentially be the weakest condition to control \eqref{eq:mollifierratiocondition}.


We notice that for the $L^1$ norm we obtain perfect cancellation in the fraction when calculating \eqref{eq:L1cancleation}, as a mollifier acts like a local weighted average. However, when we perform the calculation in \eqref{eq:L1cancleation}, but in $L^p$, then instead we obtain
 \begin{equation}
 \sum_{j=1}^m C_d \frac{|B_\eps|}{\sum_{i=1}^{k_j} b_i |{D_i}|}\left\|\sum_{i=1}^{k_j} b_i \chi_{D_i}\right\|_{L^p(\Omega)}=\sum_{j=1}^m C_d \frac{|B_\eps|}{\sum_{i=1}^{k_j} b_i |{D_i}|}\left(\sum_{i=1}^{k_j} b_i^p |D_i|\right)^{1/p}
 \end{equation}
 and if we assume that the $b_i=1$ then we get $\sum_{j=1}^m C_d |B_\eps|\left(\sum_{i=1}^{k_j}  |D_i|\right)^{1/p-1}$. As $1/p-1<0$ then for certain functions this term could blow up. 
 
 In fact if one chooses a function made of separated spikes where the supports get smaller and smaller then we can show this blow up. We will formulate a simple  counterexample so that it is in one dimension, non-continuous and non-negative though more regular counter examples can be constructed in higher dimensions, that are for instance, even smooth and strictly positive. 

Firstly,  note that if we show that $\left\|\frac{f}{f^\eps}\right\|_{L^p}$ blows up as $\eps \to 0 $ then $\left\|\frac{f}{f^\eps}-\frac{f^\eps }{f^\eps }\right\|_{L^p}$ will also blow up. We can take $x\in \T$ and define  
our counter example 
\begin{equation}
f(x):=\sum^\infty_{i=1} \chi_{\left[\frac{1}{i},\frac{1}{i}+\frac{1}{2^{i}}\right]}(x).
\end{equation} 
It is easy to see that $f\in B_p^{\alpha, \infty}(\T)$ for $p>1$ and any $0<\alpha<1-\frac{1}{p}$ by regularizing and using Lemma 2.49 from \cite{bahouri2011fourier}.
So that we  have the sum of  separated spikes so they are further than $\frac{1}{i^2}$ apart yet have supports of size $\frac{1}{2^i}$. Let $\eps = \frac{1}{2i^2}$ and see that as $f$ is non-negative we can bound the sum below by just the $i$th spike and see that as mollification only acts locally, so the value on the denominator is only dependent on the  $i$th spike, thus we obtain
 \begin{equation}\label{eq:fractioonofmollificatio}
 \left\|\frac{f}{f^\eps}\right\|_{L^p(\T)}
 \ge\left\|\frac{1}{f^\eps}\right\|_{L^p\left(\frac{1}{i},\frac{1}{i}+\frac{1}{2^i}\right)} 
 =\left\|\left(\left(\chi_{\left[\frac{1}{i},\frac{1}{i}+\frac{1}{2^{i}}\right]}\right)^{\frac{1}{2i^2}}\right)^{-1}\right\|_{L^p\left(\frac{1}{i},\frac{1}{i}+\frac{1}{2^i}\right)}.
 \end{equation}
 We can then the bound mollification of $\chi_{\left[\frac{1}{i},\frac{1}{i}+\frac{1}{2^{i}}\right]}$ in a similar method to \eqref{eq:mollifierupperbound} but in one dimension and so we can bound \eqref{eq:fractioonofmollificatio} below by 
 \begin{equation}
 \left\|\frac{f}{f^\eps}\right\|_{L^p(\T)}
 \ge C\frac{2^i}{2i^2} \|1\|_{L^p\left(\frac{1}{i},\frac{1}{i}+\frac{1}{2^i}\right)}
=C\frac{2^i}{2i^2}2^{-i/p}
=C\frac{2^{i(1-1/p)}}{{2i^2} }.
 \end{equation}
As $f$ is the sum of  infinitely many   spikes  there will exist an appropriate spike for any  $\eps_i$ and thus 
we can send $i\to \infty$ and, as $1-1/p>0$, so $C\frac{2^{i(1-1/p)}}{{2i^2}} \to \infty$ which implies that $\left\|\frac{f}{f^\eps}\right\|_{L^p(\T)}\to \infty$.

\section{Energy Conservation on Domains with boundary}\label{section:with boundary}

We have derived the local energy conservation equations on $(0,T)\times \T^d$ and so for an $\phi\in C^\infty_c((0,T)\times \T^d)$ we have that
\begin{equation}\label{eq:localenergyequation}
\int_0^T\int_{\T^d}\partial_t\phi\cdot\left(\frac{1}{2}\rho|u|^2+P(\rho)\right)+\nabla \phi\cdot\left[\left(\frac{1}{2}\rho|u|^2+p(\rho)+P(\rho)\right)u\right]\d t \d x=0.
\end{equation}
The local energy equation is derived by taking momentum balance equations and testing with $(\phi u^\eps)^\eps$ and using that mollification is symmetric to regularise the equation. For  the continuity equation we just use $\phi ^\eps$ to test the equation and again move the mollification onto the equation.  Once this is done all the calculations are done locally on $\supp(\phi)$. 

When studying the isentropic Euler equations on a bounded domain with Lipschitz boundary $\Omega$ we have  
\begin{equation}\label{eq:compressibleEulerwithboundary}
\begin{aligned}
\partial_t(\rho u)+\diverg(\rho u\otimes u)+\nabla p(\rho)&=0,\qquad \mathrm{in} \quad[0,T]\times \Omega\\
\partial_t\rho+\diverg(\rho u)&=0,\qquad \mathrm{in}\quad [0,T]\times \Omega\\
u\cdot n&=0, \qquad \mathrm{on}\quad [0,T]\times \partial\Omega
\end{aligned}
\end{equation}
where $n$ denotes the outward normal vector field for $\partial\Omega$. For any $\phi\in C^\infty_c((0,T)\times \Omega)$ we can find an $\eps_0>0$ such that for all $0<\eps<\eps_0$ then both $\phi^\eps, (\phi u^\eps)^\eps \in C^\infty_c((0,T)\times \Omega)$ and so can apply the same method as above to obtain a local energy equation on $(0,T)\times \Omega$ of the form \eqref{eq:localenergyequation}. Here we are assuming the same conditions on $u,\rho$ and $p$ as in the previous theorems and in the corollaries in sections \ref{section:divergencevmeasure} and \ref{section:Holdercontinuity}, yet making the appropriate changes so that $u$ and $\rho$ are defined on the domain $(0,T)\times \Omega$ rather than $(0,T)\times \T^d$.

The following theorem and its proof follow ideas from~\cite{BaTiWi2018}:

\begin{theorem}
	Let $\rho$, $u$ be a solution of~\eqref{eq:compressibleEulerwithboundary} in the sense of distributions. Assume that $\rho,u$ and $p$ satisfy the conditions necessary to derive the local energy equality \eqref{eq:localenergyequation}. Assume further that $\rho\in L^\infty((0,T)\times \partial \Omega)$,  $\partial\Omega$ is $C^2$ and $u\cdot n$ is continuous at the boundary then we have energy conservation on $\Omega$, that is, for $\Theta(t)\in C^\infty_c(0,T)$
	\begin{equation}\label{eq:globalinspace}
	\int^T_0 \int_{\Omega}\partial_t\Theta(t)\cdot\left(\frac{1}{2}\rho|u|^2+P(\rho)\right)\d t \d x=0
	\end{equation}
	and further if $u,\rho$ are weakly continuous in time then 
	\begin{equation}\label{eq:globalinspaceandtime}
	\int_{\Omega} \frac{1}{2}\rho|u|^2(t_1,x) +P(\rho)(t_1,x)\d x=\int_{\Omega} \frac{1}{2}\rho|u|^2(t_2,x) +P(\rho)(t_2,x)\d x, 
	\end{equation}
	for any $t_1,t_2\in [0,T]$.
\end{theorem}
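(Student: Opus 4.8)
The plan is to upgrade the local energy equality \eqref{eq:localenergyequation}, which is so far available only for test functions compactly supported in $(0,T)\times\Omega$, to the global‑in‑space identity \eqref{eq:globalinspace} by inserting a cut‑off that localises away from $\partial\Omega$ and then removing it. Since $\partial\Omega\in C^2$, the distance function $d(x):=\dist(x,\partial\Omega)$ is of class $C^2$ on a collar $U_{d_0}:=\{0\le d<d_0\}$, with $|\nabla d|\equiv1$ there and $\nabla d$ equal to the inner unit normal (so $\nabla d=-n$ on $\partial\Omega$); this is the only place the $C^2$ (rather than merely Lipschitz) regularity of the boundary is needed. Fix $\Theta\in C^\infty_c(0,T)$, pick $\zeta\in C^\infty([0,\infty);[0,1])$ with $\zeta\equiv0$ on $[0,1]$ and $\zeta\equiv1$ on $[2,\infty)$, and set $\chi_\delta(x):=\zeta(d(x)/\delta)$, so that $\chi_\delta\in C^\infty_c(\Omega)$, $\chi_\delta\uparrow1$ pointwise, $\nabla\chi_\delta=\delta^{-1}\zeta'(d/\delta)\nabla d$ is supported in the thin shell $\Sigma_\delta:=\{\delta<d<2\delta\}$, and $\|\nabla\chi_\delta\|_{L^\infty}\le C\delta^{-1}$. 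Testing \eqref{eq:localenergyequation} against $\phi(t,x)=\Theta(t)\chi_\delta(x)$ (legitimate, since the integrands in \eqref{eq:localenergyequation} lie in $L^1((0,T)\times\Omega)$ and that identity extends to Lipschitz test functions compactly supported in $(0,T)\times\Omega$) yields
\begin{equation}
\int_0^T\!\!\int_\Omega\partial_t\Theta\,\chi_\delta\Big(\tfrac12\rho|u|^2+P(\rho)\Big)\d x\d t
=-\int_0^T\!\!\int_{\Sigma_\delta}\Theta\,\nabla\chi_\delta\cdot\Big[\Big(\tfrac12\rho|u|^2+p(\rho)+P(\rho)\Big)u\Big]\d x\d t=:-J_\delta .
\end{equation}
By dominated convergence the left‑hand side tends as $\delta\to0$ to the left‑hand side of \eqref{eq:globalinspace}, so everything reduces to proving $J_\delta\to0$.

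The boundary‑layer term $J_\delta$ is the heart of the matter. Writing $\nabla\chi_\delta=\delta^{-1}\zeta'(d/\delta)\nabla d$ and using that $0\le\rho\le\overline\rho$ keeps $p(\rho)+P(\rho)$ bounded on the interior shell $\Sigma_\delta$, one gets
\begin{equation}
|J_\delta|\le \frac{C\|\Theta\|_{L^\infty}}{\delta}\int_0^T\!\!\int_{\Sigma_\delta}\big(1+|u|^2\big)\,\big|u\cdot\nabla d\big|\d x\d t
\le \frac{C\|\Theta\|_{L^\infty}}{\delta}\,\omega(2\delta)\int_0^T\!\!\int_{\Sigma_\delta}\big(1+|u|^2\big)\d x\d t,
\end{equation}
where $\omega(s):=\esssup_{\{d\le s\}}\!\big|u\cdot\nabla d\big|$. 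Since $u\cdot\nabla d$ agrees, up to the harmless $O(s)$ discrepancy between $\nabla d$ and the normal at the foot‑point, with $-\,u\cdot n$, which is assumed continuous up to $\partial\Omega$ and vanishes there, we have $\omega(2\delta)\to0$. On the other hand, the $C^2$ geometry gives $|\Sigma_\delta|\le C\delta$, so the proof closes provided $\int_0^T\!\int_{\Sigma_\delta}|u|^2\,\d x\d t\le C\delta$; then $\delta^{-1}\int_0^T\!\int_{\Sigma_\delta}(1+|u|^2)\le C$ uniformly and $|J_\delta|\le C\|\Theta\|_{L^\infty}\,\omega(2\delta)\to0$, which establishes \eqref{eq:globalinspace}. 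I expect \emph{this last input} — that $u$ has enough integrability up to $\partial\Omega$ for the matching rate $\int_{\Sigma_\delta}|u|^2\sim\delta$ to hold (automatic if $u$ is continuous up to $\partial\Omega$ or admits an $L^2$ trace, delicate otherwise), so that the $\delta^{-1}$ from $\nabla\chi_\delta$ is exactly absorbed and the smallness of $u\cdot n$ near $\partial\Omega$ finishes the job — to be the only genuine obstacle. Equivalently, and this is the route of \cite{BaTiWi2018}, one may flatten $\partial\Omega$ locally by a $C^2$ diffeomorphism, replace the symmetric mollifier by one shifted slightly into $\Omega$ so that the regularised energy balance holds pointwise up to $\partial\Omega$, integrate it against $\Theta(t)$ over $\Omega$, and apply the divergence theorem; the boundary term is then $\int_0^T\!\int_{\partial\Omega}\big(\tfrac12\rho^\eps|u^\eps|^2+p(\rho^\eps)+P(\rho^\eps)\big)\,u^\eps\cdot n$, which vanishes in the limit by the same ingredients. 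It is in this variant that the hypothesis $\rho\in L^\infty((0,T)\times\partial\Omega)$ enters, since there the pressure terms must be controlled in the trace sense on $\partial\Omega$, which the a.e.‑in‑$\Omega$ bound $\rho\le\overline\rho$ does not provide.

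Finally, \eqref{eq:globalinspace} says precisely that $E(t):=\int_\Omega\big(\tfrac12\rho|u|^2+P(\rho)\big)(t,\cdot)\,\d x\in L^\infty(0,T)$ has vanishing distributional derivative on $(0,T)$, hence equals a constant $E_0$ for almost every $t$. If in addition $\rho$ and $\rho u$ are weakly continuous in time, then along any sequence $t_k\to t$ one has $\rho(t_k)\toweak\rho(t)$ and $\rho u(t_k)\toweak\rho u(t)$, so weak lower semicontinuity of the convex functionals $\varrho\mapsto\int_\Omega P(\varrho)$ and $(m,\varrho)\mapsto\int_\Omega\tfrac12|m|^2/\varrho$ gives $E(t)\le E_0$ for every $t$; this, combined with \eqref{eq:globalinspace} applied on subintervals (equivalently, with $\Theta$ approximating the indicator of $(t_1,t_2)$ and using the weak‑in‑time continuity to identify the endpoint values), forces $E\equiv E_0$ on $[0,T]$, which is \eqref{eq:globalinspaceandtime}.
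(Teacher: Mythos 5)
Your proposal is correct and follows the same route as the paper: test the local energy equality with $\Theta(t)\,\chi(d_{\partial\Omega}(x)/\delta)$, pass to the limit in the bulk term by dominated convergence, and kill the boundary-layer term using that $u\cdot n$ is continuous at $\partial\Omega$ and vanishes there, together with $|\Gamma_{2\delta}|\le C\delta$ from the $C^2$ collar. The one point you single out as the ``only genuine obstacle'' --- absorbing the $\delta^{-1}$ from $\nabla\chi_\delta$ --- is resolved in the paper by a cruder device than the one you seek: the entire flux density $\tfrac12\rho|u|^2+p(\rho)+P(\rho)$ is placed in $L^\infty$ near the boundary (which is where the hypothesis $\rho\in L^\infty((0,T)\times\partial\Omega)$ is invoked, and which implicitly requires $u$ bounded up to $\partial\Omega$, as it is under the H\"older hypotheses of Corollary~\ref{HolderCor}), while the factor $|\nabla d_{\partial\Omega}\cdot u|$ is kept \emph{inside} the integral; one then argues that $\frac1\delta\int_{\Gamma_{2\delta}}|\nabla d_{\partial\Omega}\cdot u|\to C\int_{\partial\Omega}|u\cdot n|=0$ by a Lebesgue-differentiation--type limit. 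Your bookkeeping --- pulling out $\esssup_{\Gamma_{2\delta}}|u\cdot\nabla d|$ and requiring only $\int_0^T\!\int_{\Sigma_\delta}|u|^2\lesssim\delta$ --- asks for strictly less than the paper's implicit $L^\infty$ bound on $u$ near the boundary, so there is no gap in your argument that is not also present (and handled by fiat) in the paper's. The treatment of \eqref{eq:globalinspaceandtime} via $\Theta_\nu\to\mathbb{I}_{[t_1,t_2]}$ and weak temporal continuity to identify the endpoint values is the same in both; your additional lower-semicontinuity remark is not used by the paper, which simply asserts the convergence of the endpoint terms.
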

  
  \begin{proof}
  	For any $\phi\in C^\infty_c((0,T)\times \Omega)$ we can find an $\eps_0>0$ such that for all $0<\eps<\eps_0$ then both $\phi^\eps, (\phi u^\eps)^\eps \in C^\infty_c((0,T)\times \Omega)$ and so assuming sufficient regularity of $\rho, u$ and $p$ we obtain  
  	\begin{equation}\label{eq:localenergyequationwithboundary}
  	\int_0^T\int_{\Omega}\partial_t\phi\cdot\left(\frac{1}{2}\rho|u|^2+P(\rho)\right)+\nabla \phi\cdot\left[\left(\frac{1}{2}\rho|u|^2+p(\rho)+P(\rho)\right)u\right]\d t \d x=0.
  	\end{equation}
  	Let $\chi :\R^+ \to \R$ be a non-negative, smooth function such that
  	\begin{equation}
  	\chi(s):=\begin{cases}
  	0 & \quad \mathrm{if}\quad s<1\\
  	1 & \quad \mathrm{if}\quad s>2
  	\end{cases}
  	\end{equation}
  	and define for $x\in \overline \Omega$ the function $d_{\partial \Omega}(x)$ as the euclidean distance from $x$ to the closest point on the boundary. We can then define for any $\delta>0$ the composition $\chi\left(\frac{d_{\partial \Omega}(x)}{\delta}\right)$ and see that as $\delta\to 0$ so does  $\chi\left(\frac{d_{\partial \Omega}(x)}{\delta}\right) \to \mathbb{I}_{\Omega}$. Further, let $\Theta(t)\in C^\infty_c(0,T)$. 
  
  	We can for any $\delta>0$ let $\phi(x,t)=\chi\left(\frac{d_{\partial \Omega}(x)}{\delta}\right) \Theta(t)$ in \eqref{eq:localenergyequationwithboundary} and we obtain 
  	\begin{multline}
  	\int_{\Omega}\chi\left(\frac{d_{\partial \Omega}(x)}{\delta}\right)\int_0^T\partial_t\Theta(t)\cdot\left(\frac{1}{2}\rho|u|^2+P(\rho)\right)\d x\d t\\ +\int_0^T\Theta(t)\int_{\Omega}\nabla \chi\left(\frac{d_{\partial \Omega}(x)}{\delta}\right) \cdot\left[\left(\frac{1}{2}\rho|u|^2+p(\rho)+P(\rho)\right)u\right]\d t \d x=0
  	\end{multline}
  	and by the chain rule we see that   $\nabla \chi\left(\frac{d_{\partial \Omega}(x)}{\delta}\right)=\frac{1}{\delta}\chi'\left(\frac{d_{\partial \Omega}(x)}{\delta}\right)\nabla d_{\partial\Omega}(x)$ and so
  	\begin{multline}\label{eq:globalisinginspace}
  	0=\int_{\Omega}\chi\left(\frac{d_{\partial \Omega}(x)}{\delta}\right)\int_0^T\partial_t\Theta(t)\cdot\left(\frac{1}{2}\rho|u|^2+P(\rho)\right)\d x\d t\\ +\int_0^T\Theta(t)\int_{\Omega}\frac{1}{\delta}\chi'\left(\frac{d_{\partial \Omega}(x)}{\delta}\right)\nabla d_{\partial\Omega}(x) \cdot\left[\left(\frac{1}{2}\rho|u|^2+p(\rho)+P(\rho)\right)u\right]\d t \d x.
  	\end{multline}
  	As $\chi\left(\frac{d_{\partial \Omega}(x)}{\delta}\right) \to \mathbb{I}_{\Omega}$ strongly so the first integral on the R.H.S. of \eqref{eq:globalisinginspace} will converge to 
  	\begin{equation}
  	\int_0^T \int_{\Omega}\partial_t\Theta(t)\cdot\left(\frac{1}{2}\rho|u|^2+P(\rho)\right)\d x\d t
  	\end{equation}
  	as we wanted. All that is left is to show that the other term on the R.H.S. of \eqref{eq:globalisinginspace} vanishes in the limit.
  	
  	As $\partial \Omega$ is $C^2$ we can use \cite{gilbarg2015elliptic}, specifically Lemma 14.16, to see that there exists an $a>0$ such that $d_{\partial\Omega}(x)\in C^2(\Gamma_a)$ where $\Gamma_a:=\{x\in \overline\Omega: d_{\partial\Omega}(x)<a\}$. Further, in a similar argument to \cite{bardos2014non} Section 7,  when $x\in \Omega$ is sufficiently close to $\partial \Omega$ then there exists a unique point $\hat x\in \partial\Omega $ such that $x=\hat x + n(\hat x)d_{\partial \Omega}(x) $
  	where $n(\hat x)$ is the unit outward normal to the boundary at $x$. 
  	We see that we can bound the modulus  for the second term on the R.H.S.  of \eqref{eq:globalisinginspace} by
  	\begin{multline}\label{eq:lebequediffbouund}
  	\left\|\chi'\left(\frac{d_{\partial \Omega}}{\delta}\right)\right\|_{L^\infty}\int_0^T\Theta(t)\frac{1}{\delta}\int_{\Gamma_{2\delta}}|\nabla d_{\partial\Omega}(x) \cdot u|\left|\left(\frac{1}{2}\rho|u|^2+p(\rho)+P(\rho)\right)\right|\d t \d x\\
  	\le 	C\int_0^T\Theta(t)\frac{1}{\delta}\int_{\Gamma_{2\delta}}|\nabla d_{\partial\Omega}(x) \cdot u|\d t \d x
  	\end{multline}
  	as we know that $	\left\|\chi'\left(\frac{d_{\partial \Omega}}{\delta}\right)\right\|_{L^\infty}\le C$ and by our assumptions $\left\|\left(\frac{1}{2}\rho|u|^2+p(\rho)+P(\rho)\right)\right\|_{L^\infty} \le C$ as well.
  For  $2\delta<a$ we know that $d_{\partial\Omega}\in C^2$ and furthermore as $\nabla d_{\partial\Omega}\in C^1$, in the region $\Gamma_{2\delta}$,  $|\nabla d_{\partial\Omega}(x) \cdot u| \to C|n(\hat x)\cdot u(\hat x)|$ as  long as $u(x)\to u(\hat x)$ as $x\to \hat x$ and for this the assumption that $u\cdot n$ is continuous at the boundary will suffice.  Thus as $\partial \Omega$ is at least Lipschitz so    $|\Gamma_{2\delta}|\le C \delta |\partial \Omega|$ and so we can apply Lebesgue Differentiation theorem to  \eqref{eq:lebequediffbouund} and see that as $\delta \to 0$ so 
  \begin{equation}
  C\int_0^T\Theta(t)\frac{1}{\delta}\int_{\Gamma_{2\delta}}|\nabla d_{\partial\Omega}(x) \cdot u|\d t \d x\to C\int_0^T\Theta(t)\int_{\partial\Omega}|n(\hat x)\cdot u(\hat x)|\d t \d \hat x=0
  \end{equation}
  as $n(\hat x)\cdot u(\hat x)=0$ and so we have shown \eqref{eq:globalinspace}. 
  
  We now want to show \eqref{eq:globalinspaceandtime} with the extra assumptions of weak continuity in time of both $u$ and $\rho$. To do this we define  
    the sequence of functions  $\Theta_{\nu}: [0,T] \to \mathbb{R}$ which are non-negative, smooth  where for any point $t_1,t_2\in [0,T]$ where $t_1<t_2$ then
    	\begin{equation}
    	\Theta_{\nu}(\tau):=\begin{cases}
    	0 & \quad \mathrm{if}\quad \tau<t_1+\nu \quad \mathrm{or}\quad \tau> t_2-\nu\\
    	1 & \quad \mathrm{if}\quad \tau>t_1+2\nu \quad \mathrm{or}\quad \tau< t_2-2\nu
    	\end{cases}
    	\end{equation}
    	and see similarly that as $\nu \to 0$ so does $\Theta_{\nu}(t) \to \mathbb{I}_{[t_1,t_2]}$. We see that $\Theta_{\nu}\in C^\infty_c(0,T)$ for every $\nu>0$ and so substituting this function into \eqref{eq:globalisinginspace} we obtain 
    	\begin{equation}
    	\int^T_0 \int_{\Omega}\partial_t\Theta_{\nu}(t)\cdot\left(\frac{1}{2}\rho|u|^2+P(\rho)\right)\d t \d x=0
    	\end{equation} 
    	for every $\nu$. From our choice of $\Theta_{\nu}$ we see that
    	\begin{align}
    		\int^T_0 \int_{\T^d}\partial_t\Theta_{\nu}(t)\cdot\left(\frac{1}{2}\rho|u|^2+P(\rho)\right)\d t \d x&=\int^{t_1+2\nu}_{t_1} \partial_t\Theta_{\nu}(t)\cdot\int_{\Omega}\left(\frac{1}{2}\rho|u|^2+P(\rho)\right)\d t \d x\\
    		&+\int^{t_2}_{t_2-2\nu}\partial_t\Theta_{\nu}(t)\cdot \int_{\Omega}\left(\frac{1}{2}\rho|u|^2+P(\rho)\right)\d t \d x.
    	\end{align}
    	We know that $\int^{t_1+2\nu}_{t_1} \partial_t\Theta_{\nu}(t)\d t =1$ and $\int^{t_2}_{t_2-2\nu}\partial_t\Theta_{\nu}(t)\d t =-1$ by the fundamental theorem of calculus and as $\nu\to 0$ these terms approximate the identity at $t_1$ and $t_2$ and thus these terms converge to 
    	\begin{equation}
    		\int_{\Omega} \frac{1}{2}\rho|u|^2(t_1,x) +P(\rho)(t_1,x)\d x \quad\mathrm{and}\quad -\int_{\Omega} \frac{1}{2}\rho|u|^2(t_2,x) +P(\rho)(t_2,x)\d x
    	\end{equation}
    	respectively, assuming
    	weak continuity of $\rho$ and $u$ in time. Thus we are done.  
  \end{proof}

\subsection*{Acknowledgements}

We would like to thank the universities of Hannover and Warsaw for creating great environments for research. T.D acknowledges the support from the National Science Centre (Poland), 2012/05/E/ST1/02218. The research funding for J.S. is graciously given by German research foundation (DFG) grant no. WI 4840/1-1.

\end{document}